\newtheorem{thm}{Theorem}[section]
\newtheorem{lem}[thm]{Lemma}
\newtheorem{cor}[thm]{Corollary}
\newtheorem{prop}[thm]{Proposition}
\theoremstyle{definition}
\newtheorem{defn}[thm]{Definition}
\newtheorem{ex}[thm]{Example}
\newtheorem{exs}[thm]{Examples}
\theoremstyle{remark}
\newtheorem{rem}[thm]{Remark}
\newcommand{\thmref}[1]{Theorem~\ref{#1}}
\newcommand{\corref}[1]{Corollary~\ref{#1}}
\newcommand{\secref}[1]{\S\ref{#1}}
\newcommand{\propref}[1]{Proposition~\ref{#1}}
\newcommand{\lemref}[1]{Lemma~\ref{#1}}
\newcommand{\exref}[1]{Example~\ref{#1}}
\newcommand{\Oo}{{\mathcal  O}}
\newcommand{\Sp}{{\mathcal Sp}}
\newcommand{\T}{{\mathcal T}}
\newcommand{\Ss}{{\mathbb  S}}
\newcommand{\Sinfty}{\Sigma^{\infty}}
\newcommand{\Oinfty}{\Omega^{\infty}}
\newcommand{\SnSp}{\Sigma_n\text{-}Sp}
\newcommand{\MCom}{\Mod\text{-}Com}
\newcommand{\ComM}{Com\text{-}\Mod}
\newcommand{\ComAlg}{Com\text{-}\Alg}
\newcommand{\Map}{\operatorname{Map}}
\newcommand{\Com}{\operatorname{Com}}
\newcommand{\sm}{\wedge}
\newcommand{\colim}{\operatorname*{colim}}
\newcommand{\Sym}{\mathsf{Sym}}
\newcommand{\co}{\circ_{\Oo}}
\newcommand{\ca}{\circ_{Com}}
\newcommand{\Alg}{\mathsf{Alg}}
\newcommand{\Mod}{\mathsf{Mod}}
\newcommand{\Epi}{\operatorname{Epi}}
\newcommand{\Mono}{\operatorname{Mono}}
\newcommand{\n}{{\bf n}}
\newcommand{\rr}{{\bf r}}
\newcommand{\m}{{\bf m}}
\newcommand{\ra}{\rightarrow}
\newcommand{\xra}{\xrightarrow}
\newcommand{\la}{\leftarrow}
\newcommand{\xla}{\xleftarrow}
\begin{document}

\title[Right Com-modules]{Applications of the circle product with a right Com--module to the theory of commutative ring spectra}

\author[Kuhn]{Nicholas J. Kuhn}
\address{Department of Mathematics, University of Virginia, Charlottesville, VA, USA}


\keywords{operads, Andr\'e-Quillen spectra}

\email{njk4x@virginia.edu}

\date{October 4, 2024}

\begin{abstract}
If $Com$ is the reduced commutative operad, the category of $Com$-algebras in spectra  is the category of nounital commutative ring spectra.  The theme of this survey is that many important constructions on $Com$-algebras are given by taking the derived circle product with well chosen right $Com$-modules.

We show that examples of constructions arising this way include $K \otimes I$, the tensor product of a based space $K$ with such an algebra $I$, and $TQ(I)$, the Topological Andr\'e-Quillen homology spectrum of $I$.

We then show how filtrations of right Com-modules can be used to filter such constructions.

A natural decreasing filtration on right $Com$-modules is described. When specialized to the $Com$-bimodule $Com$, this defines the augmentation ideal tower of $I$, built out of the extended powers of $TQ(I)$.

A less studied natural increasing filtration is described. When specialized to the right Com-module used to define $TQ(I)$, one gets a filtration on $TQ(I)$ built out of $I$ and the spaces in the Lie cooperad. There are two versions of this in the literature, and our setting here makes it easy to prove that these agree. 

Much of this applies with $Com$ replaced by a more general reduced operad, and we make a few remarks about this.
\end{abstract}




\maketitle

\section{Introduction} \label{intro sec}

If $Com$ is the reduced commutative operad in the symmetric monoidal category of spectra $(\Sp, \sm, \Ss)$, a $Com$--algebra $I$ is a nonunital commutative ring spectrum. These arise in many natural ways, e.g. as the Spanier-Whitehead duals of based spaces, or fibers of commutative ring spectra augmented over $\Ss$.

As we will review, a $Com$-algebra is a special kind of left $Com$-module. The theme of this survey is that many important constructions on $\ComAlg$, the category $Com$-algebras, arise by taking (derived) circle products $M\ca^LI$ with well chosen {\em right} $Com$-modules $M$.  This suggests looking for useful structure within $\MCom$, the category of right $Com$-modules.

We feature two constructions that can be written in this way: $TQ(I)$, the Topological Andr\'e-Quillen spectrum of an $Com$-algebra $I$ as in \cite{basterra, kuhn pereira}, and the tensor product $K \otimes I$, where $K$ is a based space and $I$ is a $Com$--algebra as in \cite{K tensor R}.  The right $Com$--module associated with $TQ(I)$ and the right $Com$-module associated to $\displaystyle \colim_k \Sigma^{-k} (S^k \otimes I)$ are cofibrant and weakly equivalent, and this leads to a simple and direct proof that 
$$TQ(I) \simeq \colim_k \Sigma^{-k} (S^k \otimes I),$$
so that the functor $I \mapsto TQ(I)$ can be viewed as stabilization in $\ComAlg$ \cite{basterra mccarthy}.

We feature two `dual' bits of structure on $\MCom$. 

We review that there is a canonical decreasing filtration on right $Com$-modules, as studied in \cite{kuhn pereira} and \cite{harper and hess}.  Specialization to $Com$, viewed as a right $Com$-module, leads to a tower converging to $I$ with $n$th fiber equivalent to $Com(n) \sm_{\Sigma_n} TQ(I)^{\sm n}$.  

We show that there is also a canonical increasing filtration on right $Com$-modules.  Applied to the appropriate right $Com$-modules, this leads to the canonical filtration of $K \otimes I$ of \cite{K tensor R}, the filtration of $TQ(I)$  also from \cite{K tensor R}, and the filtration of $TQ(I)$ defined in \cite{behrens rezk}.  Our approach makes it easy to answer in the affirmative an open question from \cite{behrens rezk}: the filtrations on $TQ(I)$ in \cite{K tensor R} and \cite{behrens rezk} are equivalent, as they arise from the canonical filtration of two right modules that are weakly equivalent and cofibrant.  The filtration on $TQ(I)$ has $n$th cofiber equivalent to $Lie(n) \sm_{\Sigma_n} I^{\sm n}$, where $Lie$ is the spectral cooperad $BCom$ of \cite{ching}.

We describe the organization of the paper.

In \secref{background sec}, we will review some basic constructions related to operads and their modules. In particular, we remind the reader that operads and their modules are special sorts of symmetric sequences in $\Sp$: sequences $M = \{M(n) \ | \ n = 0,1,2,\dots\}$ such that $M(n)$ is a spectrum with an action of the $n$th symmetric group $\Sigma_n$. The category of these, $\Sym(Sp)$, is a monoidal category using the circle product $\circ$, with  unit $\Ss(1) = \{*,\Ss,*,*, \dots\}$.  The reduced commutative operad is $Com = \{*,\Ss,\Ss,\Ss, \dots\}$, and  we will consider only {\em reduced} right $Com$--modules: right modules $M$ with $M(0) = *$.   Also in this section we recall some homotopical properties of these constructions when working in the model category of symmetric spectra used in \cite{kuhn pereira},  and discuss the induced model structure on right $\Com$--modules as defined in \cite{fresse}.

In \secref{bar construction and TQ section} we review the definition and basic properties of the bar construction $B(M,Com,N)$, where $M$ and $N$ are right and left $\Com$-modules. In particular, $B(M,Com,Com)$ is a cofibrant replacement for $M$, and thus $B(M,Com,I^c)$ is a model for $M \ca^L I$, where $I^c$ is a cofibrant replacements for $I$.  An important special case is the `spectrum of derived indecomposables of $I$', defined by $TQ(I) = B(\Ss(1),Com,I^c)$; this definition goes back to \cite{basterra}.

In \secref{tensor sec}, we define a $Com$-bimodule $(K \otimes Com)$ by the formula $(K \otimes Com)(n) = K^{\sm n} \sm \Ss$, and show that $(K \otimes Com)\circ_{Com} I$ is isomorphic to  $K \otimes I$. This is a variant of the author's old observations in \cite{K tensor R}.  We note that $(K \otimes Com)$ will be cofibrant as a right $Com$-module.

In \secref{taq is stablization sec},  we observe that both $B(\Ss(1), Com, Com)$ and $\displaystyle \colim_{k} \Sigma^{-k} (S^k \otimes Com)$  are cofibrant replacements of $\Ss(1)$.  This allows us to deduce that there is a weak equivalence of right $Com$-modules
$\displaystyle TQ(I) \simeq \colim_k (\Sigma^{-k} S^k \otimes I)$,
recovering the result that the functor $TQ$ is stabilization in $\ComAlg$.

In \secref{tower sec} we review some results from \cite{kuhn pereira}. We note that a right $Com$-module $M$ admits a canonical decreasing filtration
$$ M = F^1M \la F^2M \la F^3M \la \dots $$
such that for all $Com$-algebras $I$, there is a homotopy fiber sequence
$$ F^{n-1}M \ca^L I \ra F^nM \ca^L I \ra M(n) \sm_{\Sigma_n} TQ(I)^{\sm I}.$$
If we let $I^n = (F^nCom) \ca^L I$, this specializes to define a natural `augmentation ideal filtration' of a $Com$--algebra $I$, 
$$ I = I^1 \la I^2 \la I^3 \la \dots,$$ 
with homotopy fiber sequences 
$$I^{n-1} \ra I^n \ra \Ss \sm_{\Sigma_n} TQ(I)^{\sm n}.$$

In \secref{com module filt sec} we show that a right $Com$--module $M$ admits a canonical increasing filtration by right $Com$--modules
$$  F_1M \ra F_2M \ra F_3M \ra \dots$$
such that $F_nM(k) = M(k)$ if $n\geq k$, and for all $Com$--algebras $I$, there is a homotopy fiber sequence
$$ F_{n-1}M \circ_{Com}^L I \ra F_nM \circ_{Com}^L I \ra \bar M(n)\sm_{\Sigma_n}I^n,$$
where $\bar M(n)$ is the cofiber of $F_{n-1}M(n) \ra M(n)$.

In \secref{Kuhn filt sec}, we apply the theory of the previous section to compare two filtrations of $TQ_{\Com}(I)$ that are in the literature. We identify $\bar M(n)$ when $M = K \otimes Com$, recovering the filtrations in \cite{K tensor R} of $K \otimes I$, and then $TQ(I) \simeq \colim_n \Sigma^{-n} S^n \otimes I$.  Similarly, we show that $\bar M(n)= Lie(n)$ when $M = B(S(1), Com, Com)$, where $Lie$ is the Lie cooperad, recovering the filtration of $TQ_{Com}(I)\simeq B(S(1), Com, Com) \circ_{Com} I$ described in \cite{behrens rezk}.  The naturality of our filtration applied to the equivalence of right $Com$--modules
$$ \colim_n \Sigma^{-n} (S^n \otimes Com) \simeq B(S(1),Com,Com)$$
then implies that these two filtrations of $TQ_{Com}(I)$ agree up to homotopy, as had been conjectured.

A final short section, \secref{remarks sec}, has some remarks about generalizations of these results with the operad $Com$ replaced by a more general operad $\Oo$. It is also certainly true that our underlying category of spectra could  be replaced by many different stable symmetric monoidal $\infty$--categories with analogous results.  For simplicity, we have not pursued this in this survey.  For similar reasons, we are a bit informal regarding model category details: for model category details in our setting see \cite{fresse, harper bar construction, harper and hess, kuhn pereira, pereira hha 2016 }.

\section{Background} \label{background sec}

\subsection{Operads, modules, and algebras} \label{OUR SETTING}

In this paper the category $Sp$ will mean the category of symmetric spectra as defined in \cite{hss}.

With the smash product $\sm$ as tensor product and sphere spectrum $\Ss$ as unit, $Sp$ is a closed symmetric monoidal category. It is also tensored over the category $\T$ of based simplicial sets (the `spaces' of the introduction) with $K \otimes X = K \sm X$, for $K \in \T$ and $X \in Sp$.  There is a notion of weak equivalence, and various model structures on $Sp$ compatible with these, such that the resulting quotient category models the standard stable homotopy category.  For our purposes we use the positive model structure used in \cite{kuhn pereira}.

Let $\SnSp$ denote the category of spectra equipped with an action of the $n$th symmetric group $\Sigma_n$.  We then let $\Sym(Sp)$ be the category of symmetric sequences in $Sp$: sequences
$M = \{M(0),M(1),M(2),\dots\}$, with $M(n) \in \SnSp$ for all $n$.

There are embeddings $i_n: \SnSp \ra \Sym(Sp)$ given by letting $i_n(X) = (*, \dots, *, \overset{n}{X}, *, \dots)$.  We will sometimes identify $X \in Sp$ with $i_0(X) = (X,*,*,\dots) \in \Sym(Sp)$.

The category of symmetric sequences in $Sp$, $\Sym(Sp)$, admits a symmetric monoidal product $\otimes$ and a nonsymmetric monoidal `composition' product $\circ$. We recall how this goes.

Given $M, N \in \Sym(Sp)$, $M \otimes N$ is defined by 
$$ (M \otimes N)(n) = \bigvee_{l+m = n} \Sigma_{n+} \sm_{\Sigma_l \times \Sigma_m}(M(l) \sm N(m)),$$
and then $M \circ N$ is defined by
$$ (M \circ N)(n) = \bigvee_{r = 0}^{\infty} M(r) \sm_{\Sigma_r}N^{\otimes r}(n).$$
 
For $n \geq 0$, let ${\bf n} = \{1, \dots, n\}$. The formula for $(M \circ N)(n)$ rewrites as 
\begin{equation*}\label{COMPPROD EQ}
 (M \circ N)(n) = \bigvee_{r=0}^{\infty} M(r) \sm_{\Sigma_r} \left(\bigvee_{\phi: \bf n \ra \bf r} N(\phi)\right),
\end{equation*}
where $N(\phi) = N(n_1(\phi)) \sm \dots \sm N(n_r(\phi))$ and $n_k(\phi)$ is the cardinality of $\phi^{-1}(k)$.

It is useful to make some first observations.
\begin{lem} \label{circ properties lem}
(a) $\displaystyle (M\circ N)(0) = \bigvee_{r = 0}^{\infty} M(r) \sm_{\Sigma_r} N(0)^r$

(b) If $N(n) = *$ for all $n \neq 0$, then, for all $M$, $(M \circ N)(n) = *$ for all $n \neq 0$.

(c) If $N(0) = *$, then  $\displaystyle (M \circ N)(n) = \bigvee_{r=1}^{\infty} M(r) \sm_{\Sigma_r} \left(\bigvee_{\phi \in \Epi(\bf n, \bf r)} N(\phi)\right)$,
    where $\Epi(\bf n, \bf r)$ denotes the set of epimorphisms from $\bf n$ to $\bf r$.
    
(d) $M \circ N$ commutes with all colimits in the first variable, and with filtered colimits and reflexive equalizers in the second variable. 
\end{lem}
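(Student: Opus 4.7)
The plan is straightforward: parts (a), (b), and (c) follow from substituting the appropriate index into the combinatorial formula for $(M \circ N)(n)$ displayed just before the lemma, while (d) is formal because all the operations in that formula are (co)limit-like. Nothing beyond unpacking definitions and a standard fact about smash products should be needed.

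For (a), setting $n = 0$ there is a unique function $\phi \colon {\bf 0} \to {\bf r}$, namely the empty function, and for it $n_k(\phi) = 0$ for every $k$, so $N(\phi) = N(0)^{\sm r}$. The formula collapses immediately to the stated expression. For (b), fix $n \geq 1$ and assume $N(m) = *$ for all $m \neq 0$: the $r = 0$ summand is empty because there are no maps ${\bf n} \to {\bf 0}$, and for $r \geq 1$ the integers $n_k(\phi)$ sum to $n$, so at least one is positive, forcing $N(n_k(\phi)) = *$ and hence $N(\phi) = *$. For (c), assume $N(0) = *$; if $\phi$ fails to be surjective then some $n_k(\phi) = 0$, so $N(\phi) = *$, leaving only the epimorphisms to contribute, and the $r = 0$ summand again vanishes when $n \geq 1$.

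For (d), the first-variable claim is easy: writing $(M \circ N)(n) = \bigvee_{r \geq 0} M(r) \sm_{\Sigma_r} N^{\otimes r}(n)$, evaluation at index $r$, smashing with a fixed spectrum, taking $\Sigma_r$-orbits, and the wedge over $r$ are each colimits or left adjoints, so they jointly preserve all colimits in $M$. For the second variable, the only nonformal step is that each functor $N \mapsto N^{\otimes r}(n)$ preserves filtered colimits and reflexive coequalizers; since this functor is built from smash products, and the smash product on $\Sp$ is a closed bifunctor on a locally presentable category, it commutes with such sifted colimits in each argument. The main point of care, and what I expect to be the principal obstacle, is checking that these colimits are preserved not just in one tensor slot at a time but jointly across all $r$ slots. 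This uses the fact that both filtered categories and reflexive pairs are sifted, so that the diagonal into the $r$-fold product is cofinal and the iterated colimit agrees with the one computed slot by slot. Wedging over $r$ and passing to $\Sigma_r$-orbits are themselves colimits, and so preserve the desired colimits automatically.
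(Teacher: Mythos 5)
Your proof is correct and is essentially the argument the paper intends: the paper offers no written proof, treating (a)--(c) as immediate from the displayed formula for $(M\circ N)(n)$ exactly as you unpack it, and citing Fresse for (d), whose justification is the same siftedness/cofinality-of-the-diagonal argument you give for the joint preservation across the $r$ tensor slots. Your side remark that (c) needs $n\geq 1$ (or reduced $M$, which is the paper's standing convention) is an accurate reading rather than a gap.
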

A reference for this last property is \cite[Prop. 2.4.1]{fresse}

The category $(\Sym(Sp), \circ, \Ss(1))$ is monoidal, with unit $\Ss(1) = i_1(\Ss) = (*,\Ss,*,*, \dots)$. 

An operad $\Oo$ is then a monoid in this monoidal category. The operad unit $\Ss(1) \ra \Oo$ corresponds to a map $\Ss \ra \Oo(1)$, and the operad multiplication $$ \mu: \Oo \circ \Oo \ra \Oo$$ corresponds to a family of appropriately compatible multiplication maps
$$\mu_{\phi}:\Oo(r) \sm \Oo(\phi) \ra \Oo(n)$$
for all $\phi: \bf n \ra \bf r$.

Similarly, left and right $\Oo$--modules $N$ and $M$ have multiplication maps $\Oo \circ N \ra N$ and $M \circ \Oo \ra M$.  An $\Oo$--algebra is a left $\Oo$--module concentrated in degree 0, and can be viewed as a spectrum $I$ equipped with appropriately compatible maps $\Oo(n) \sm_{\Sigma_n} I^{\sm n} \ra I$.

Finally we recall that if $M$ is a right $\Oo$--module, and $N$ is a left $\Oo$--module, $M \co N$ is defined as the coequalizer in $\Sym(Sp)$ of the two evident maps
$ M \circ \Oo \circ N \begin{array}{c} \longrightarrow \\[-.1in] \longrightarrow
\end{array} M \circ N$.  One has isomorphisms $M \co \Oo = M$ and $\Oo \co N = N$.
Extra structure on $M$ or $N$ can then induce evident extra structure on $M \co N$. In particular, if $I$ is an $\Oo$--algebra, and $M$ is an $\Oo$--bimodule, then $M \co I$ is again an $\Oo$--algebra.

\subsection{$Com$ modules and algebras}

We specialize the above to the case when $\Oo$ is the reduced commutative operad $Com$, where $Com$ has underlying symmetric sequence $(*,\Ss,\Ss,\Ss, \dots)$ and structure maps $\mu_{\phi} = id: \Ss \ra \Ss$ for all epimorphisms $\phi: \bf n \ra r$ with $r \geq 1$.

We let $\MCom$, $\ComM$, and $\ComAlg$ respectively denote the categories of right $Com$-modules, left $Com$-modules, and $Com$-algebras.  The category $\ComAlg$ is easily seen to be equivalent to the category of non-unital commutative algebras in $Sp$.

We will require that our right $Com$--modules $M$ be {\em reduced}: $M(0) = *$.   A right $Com$-module $M$ then  consists of appropriately compatible maps, for all epimorphisms $\phi: \bf n \ra \bf r$ with $r \geq 1$,
$$ \Delta_{\phi}: M(n) \ra M(r).$$
Thus $\MCom$ is equivalent to the category of functors $M: \Epi^{op} \ra \Sym(Sp)$, where $\Epi$ is the category with objects $\bf n$, $n \geq 1$, with morphisms $\Epi(\bf n, \bf r)$ as above.

A right $Com$--module $M$ will be a $Com$--bimodule if one also has appropriately compatible maps
$ \mu_{\phi}: N(\phi) \ra N(n)$  for all epimorphisms $\phi: \bf n \ra \bf r$. In this case, if $I$ is a $Com$--algebra, so is $M \circ_{Com} I$.

\begin{exs}  Here are some examples of $Com$--algebras.

(a) Given $X \in Sp$, $Com \circ X$ can be viewed as the free commutative nonuntial algebra generated by $X$.  Explicitly, 
$$ (\Com \circ X)(0) = \bigvee_{n=1}^{\infty} X^{\sm n}_{\Sigma_n}.$$

(b)  If $Z$ is a based space, its Spanier--Whitehead dual, $D(Z)$ is a $Com$--algebra with multiplication induced by the diagonal $\Delta: Z \ra Z \sm Z$.

(c) Given $X \in Sp$, let $I(X)$ be the fiber of the natural map $\Sinfty (\Oinfty X)_+ \ra \Ss$. Then $I(X)$ is a $Com$--algebra and the natural map of spectra $I(X) \ra \Sinfty \Oinfty X$ is a weak equivalence. (See \cite[Thm. 2.13]{kuhn hurewicz} for details.)
\end{exs}

\begin{rem}  This last example is a reminder that the category of non-unital commutative algebras is equivalent to the category of augmented commutative algebras via the correspondence sending an augemented algebras $A \ra \Ss$ to its `augmentation ideal' $I(A)$ defined as the fiber of the augmentation. See \cite{basterra}.
\end{rem}

\begin{exs} \label{M circ Com ex} Given $r \geq 1$, let $P_r$ be the right $Com$--module given by $P_r(n) = \Epi({\bf n,r})_+ \sm \Ss$.  Note that $P_r$ is acted on the left by $\Sigma_r$.  Given $M \in Sym(Sp)$, $M\circ Com$, the free right $Com$-module generated by $M$, satisfies
$$ (M \circ Com) = \bigvee_{r = 1}^{\infty} M(r)\sm_{\Sigma_r} P_r.$$
\end{exs}

\begin{ex}  The natural map of operads $Com \ra \Ss(1)$ makes $\Ss(1)$ into a $\Com$-bimodule.
\end{ex}

\subsection{Model structures}\label{MODELSTRUCTURE SEC}

We start with the model structure on symmetric spectra $Sp$ as in \cite{hss}, and then give $\Sym(Sp)$ the associated injective model structure: $f: M \ra N$ in $\Sym(Sp)$ is a weak equivalence or cofibration if it is levelwise a weak equivalence or cofibration.  We note that since $\Ss$ is cofibrant in $Sp$, both $\Ss(1)$ and $Com$ are cofibrant in $\Sym(Sp)$.

With $U$ denoting the evident forgetful functor, the adjoint pair $$ \circ \ Com: \Sym(Sp)\begin{array}{c} \longrightarrow \\[-.1in] \longleftarrow 
\end{array} \MCom: U$$ induces a model structure on $\MCom$ in a standard way: maps in $\MCom$ are weak equivalences or fibrations if they are when regarded as maps in $\Sym(Sp)$, and a generating set of cofibrations is given by the maps  of the form $M \circ Com \ra N \circ Com$, where $M \ra N$ is a cofibration in $\Sym(Sp)$.  

We note that, from the description of $M \circ Com$ in \exref{M circ Com ex}, one sees that a cofibration in $\MCom$ remains a cofibration when regarded as a map in $\Sym(Sp)$.  In particular, if $M$ is cofibrant in $\Sym(Sp)$, then so is $M \circ Com$.

Finally, we give $\ComAlg$ the model structure used in \cite{kuhn pereira}. We will need to know that $I \ra J$ is a weak equivalence in $\ComAlg$ if it is a weak equivalence in $Sp$, and that, if $I$ is cofibrant in $\ComAlg$, then it is cofibrant in $Sp$.  Given $I \in \ComAlg$ we let $I^c$ denote a cofibrant replacement.

\section{The bar construction and $TQ(I)$} \label{bar construction and TQ section}

Given $M \in \MCom$ and $N \in \ComM$, $B(M,Com,N) \in \Sym(Sp)$ is defined as the geometric realization of the simplicial object $B_*(M,Com,N)$ in $\Sym(Sp)$ defined by
$ B_s(M,Com,N) =  M \circ Com^{\circ s} \circ N$.

Note that if $M$ is a $Com$-bimodule and $I$ is a $Com$-algebra, then \\ $B(M,Com,I)$ is again a $Com$-algebra.

\begin{prop}  (a) $B(M,Com,N) = B(M,Com,Com) \circ_{Com} N$.

(b) The natural map $\epsilon: B(M,Com,Com) \ra M$ is a weak equivalence in $\MCom$.

(c) If $M$ is cofibrant in $\Sym(Sp)$, then $B(M,Com,Com)$ is a cofibrant right $Com$--module.
\end{prop}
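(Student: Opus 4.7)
The plan is to handle the three parts in sequence: (a) is a formal levelwise manipulation, (b) is the classical extra-degeneracy argument, and (c) is a model-categorical step that requires the most care. For (a), at each simplicial degree $s$,
\begin{align*}
B_s(M,Com,Com) \circ_{Com} N &= (M \circ Com^{\circ s} \circ Com) \circ_{Com} N \\
&\cong M \circ Com^{\circ s} \circ (Com \circ_{Com} N) \\
&= M \circ Com^{\circ s} \circ N = B_s(M,Com,N),
\end{align*}
using $Com \circ_{Com} N = N$. Since $(-) \circ_{Com} N$ is defined as a coequalizer in its left variable and hence preserves colimits, and since geometric realization is itself a colimit, the identification assembles to a natural isomorphism on realizations.

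For (b), I would exhibit the standard extra degeneracy. The operad unit $\eta: \Ss(1) \ra Com$, inserted in the rightmost tensor slot, defines maps $s_{-1}: M \circ Com^{\circ (s+1)} \ra M \circ Com^{\circ (s+2)}$ that, together with the given simplicial structure and the augmentation $\epsilon$, give a simplicial contraction of $B_*(M,Com,Com)$ onto the constant simplicial object $M$. Upon realization this becomes a simplicial homotopy equivalence, so $\epsilon$ is a weak equivalence in $\Sym(Sp)$; a direct check that $\epsilon$ respects the right $Com$-action then promotes this to a weak equivalence in $\MCom$.

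For (c), observe that $B_s(M,Com,Com) = (M \circ Com^{\circ s}) \circ Com$ is the free right $Com$-module on $M \circ Com^{\circ s}$, which, by the description of generating cofibrations in $\MCom$ recalled in \secref{MODELSTRUCTURE SEC}, is cofibrant in $\MCom$ whenever $M \circ Com^{\circ s}$ is cofibrant in $\Sym(Sp)$. Iterating the observation from that section that $(-) \circ Com$ preserves cofibrancy in $\Sym(Sp)$, this holds for all $s \geq 0$ when $M$ is cofibrant. The main obstacle is to upgrade this levelwise cofibrancy to cofibrancy of the realization: I would verify that $B_*(M,Com,Com)$ is Reedy cofibrant as a simplicial object in $\MCom$, which amounts to showing that the latching maps built from the degeneracies, induced by $\eta: \Ss(1) \ra Com$, are cofibrations. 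This follows from standard pushout-product properties of the composition product (see \cite{harper bar construction, kuhn pereira}), and the realization of a Reedy cofibrant simplicial object in $\MCom$ is cofibrant, giving (c).
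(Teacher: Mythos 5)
Your proof takes essentially the same approach as the paper for all three parts: (a) is the observation that $(-)\circ_{Com}N$ commutes with the colimits building the realization, (b) is the standard extra-degeneracy/simplicial-contraction argument, and (c) shows that each $B_s(M,Com,Com)$ is cofibrant in $\MCom$ and concludes by realization. The one place you do more work is in (c): the paper closes with the informal assertion that ``the geometric realization of a simplicial [object] of cofibrant objects will again be cofibrant,'' which in a general model category is not true without Reedy cofibrancy. You correctly flag this gap and propose to verify Reedy cofibrancy via the latching maps (built from the unit $\Ss(1)\to Com$) and pushout-product properties of $\circ$; this is exactly the care one must take, and it is documented in the references \cite{harper bar construction, kuhn pereira} the paper points to. So your proof is not a different route — it is the same route, with the step the paper elides made explicit.
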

\begin{proof}[Sketch Proof]  The first statement follows from the fact that $M \circ N$ commutes with all colimits in the variable $M$.  

The proof of the second statement is standard. One lets $\underline{M}_*$ be the constant simplicial object in $\Sym(Sp)$ with $\underline{M}_s = M$ for all $s$, so that $|\underline{M}_*| = M$.  Then one notes that there is a map of simplical objects $\eta_*: \underline{M}_* \ra B_*(M,Com,Com)$ such that $\epsilon_* \circ \eta_* = 1_{\underline{M}_*}$, and a simplicial homotopy from $\eta_* \circ \epsilon_*$ to $1_{B_*(M,Com,Com)}$.  Now apply realization.

For the last statement, $M$ being cofibrant in $\Sym(Sp)$ implies that $M \circ Com^{\circ s}$ will also cofibrant in $\Sym(Sp)$, and thus that $B_s(M,Com,Com) = (M \circ Com^{\circ s}) \circ Com$ will be cofibrant in $\MCom$. The geometric realization of a simplicial set of cofibrant objects will again be cofibrant, so statement (c) follows.
\end{proof}

\begin{prop}\cite[Prop.2.9]{kuhn pereira} If $M \in \MCom$ is cofibrant in $\Sym(Sp)$, and 
 $I \in \ComAlg$ is cofibrant, the natural map $B(M,Com,I) \ra M \circ_{Com} I$ is a weak equivalence.  
\end{prop}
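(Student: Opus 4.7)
The plan is to leverage the preceding proposition. By part (a), $B(M,Com,I) = B(M,Com,Com) \circ_{Com} I$, so the natural comparison map in question is obtained by applying $- \circ_{Com} I$ to the augmentation $\epsilon: B(M,Com,Com) \to M$. Part (b) identifies $\epsilon$ as a weak equivalence in $\MCom$, and part (c) says its source is cofibrant in $\MCom$, hence by \secref{MODELSTRUCTURE SEC} also cofibrant in $\Sym(Sp)$. The task thus reduces to showing that $- \circ_{Com} I$ carries $\epsilon$ to a weak equivalence of spectra when $I$ is cofibrant in $\ComAlg$ and the two right $Com$-modules in question are both cofibrant in $\Sym(Sp)$.

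My next step would be a levelwise decomposition. For any reduced right $Com$-module $N$ and any $Com$-algebra $I$, the coequalizer defining $N \circ_{Com} I$ simplifies dramatically because $Com(r) = \Ss$ for all $r \geq 1$ with all multiplication maps being the identity. After unpacking using \lemref{circ properties lem}, one sees that $\epsilon \circ_{Com} I$ is assembled, via a wedge over $r \geq 1$ and coequalizer identifications, from the levelwise maps
$$ \epsilon(r) \sm_{\Sigma_r} I^{\sm r} : B(M,Com,Com)(r) \sm_{\Sigma_r} I^{\sm r} \;\longrightarrow\; M(r) \sm_{\Sigma_r} I^{\sm r}.$$

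The main obstacle is verifying that each of these levelwise maps is a weak equivalence and that the assembly preserves weak equivalences. This is precisely where the positive stable model structure on symmetric spectra of \secref{MODELSTRUCTURE SEC} earns its keep: as in \cite{kuhn pereira, pereira hha 2016}, it is designed so that $(-) \sm_{\Sigma_r} I^{\sm r}$ preserves weak equivalences between cofibrant $\Sigma_r$-spectra, provided $I$ is cofibrant in $\Sp$. Both $B(M,Com,Com)(r)$ and $M(r)$ satisfy the required cofibrancy (the former by part (c) combined with the passage from $\MCom$ to $\Sym(Sp)$ noted in \secref{MODELSTRUCTURE SEC}, the latter by hypothesis), and $I$ is cofibrant in $\Sp$ because cofibrant $Com$-algebras are cofibrant as spectra. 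Assembling the resulting levelwise weak equivalences through the wedge and coequalizer that present $\circ_{Com}$ then yields the desired weak equivalence $B(M,Com,I) \to M \circ_{Com} I$.
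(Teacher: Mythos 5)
The paper does not prove this proposition; it simply cites \cite[Prop.\ 2.9]{kuhn pereira}. So there is no in-paper proof to compare against, and I am evaluating your argument on its own terms.

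Your first paragraph is a sound reduction: using part (a) of the preceding proposition, the comparison map is indeed $\epsilon \circ_{Com} I$ with $\epsilon: B(M,Com,Com)\to M$ the weak equivalence from part (b), and $B(M,Com,Com)$ is cofibrant in $\MCom$ by part (c). The problem is thereby recast as: show that $-\circ_{Com} I$ carries this particular weak equivalence between $\Sigma$-cofibrant right modules to a weak equivalence.

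The gap is in the second paragraph. You claim that $\epsilon \circ_{Com} I$ is ``assembled, via a wedge over $r\geq 1$ and coequalizer identifications,'' from the levelwise maps $\epsilon(r)\sm_{\Sigma_r} I^{\sm r}$, and that ``assembling the resulting levelwise weak equivalences through the wedge and coequalizer\dots yields the desired weak equivalence.'' This last step is unjustified and is in fact the entire content of the proposition. The coequalizer
$$ N\circ Com\circ I \;\rightrightarrows\; N\circ I \;\longrightarrow\; N\circ_{Com} I$$
genuinely mixes arities: for example, $Com\circ_{Com} I = I$, whereas $\bigvee_{r\geq 1} Com(r)\sm_{\Sigma_r} I^{\sm r}$ is the free nonunital commutative algebra on $I$, a very different object. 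More to the point, reflexive coequalizers of weak equivalences between cofibrant objects are \emph{not} in general weak equivalences; there is no ``two-out-of-three for coequalizers.'' If one could pass weak equivalences through the coequalizer defining $\circ_{Com}$ as you assert, there would be no need for the bar construction in the first place, since $M\circ_{Com} I$ would already be its own derived functor.

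What is actually needed, and what \cite{kuhn pereira} (drawing on \cite{fresse, harper bar construction, pereira hha 2016}) supplies, is homotopical control at the simplicial level: one shows that $B_\bullet(Com,Com,I)\to I$ admits an extra degeneracy (hence realizes to a weak equivalence), that $M\circ -$ for $\Sigma$-cofibrant $M$ commutes with geometric realization and preserves weak equivalences between levelwise-cofibrant simplicial objects, and that the relevant simplicial objects are Reedy cofibrant so that realization computes the homotopy colimit. None of this is a formal consequence of levelwise weak equivalences plus ``assembly,'' and your proposal does not address it.
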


We also record the following useful properties.

\begin{prop} \cite[Thm.2.11]{kuhn pereira} \label{kp prop} \  If we restrict right $Com$-modules to those which are cofibrant in $\Sym(Sp)$, and $Com$-algebras to those which are cofibrant in $Sp$,  the following properties hold.

(a) $B(M,Com,I)$ takes weak equivalences in either variable to weak equivalences of spectra.

(b) A homotopy cofibration sequence in $\MCom$  $ L \ra M \ra N$ induces a homotopy cofibration sequence of spectra
$ B(L,Com,I) \ra B(M,Com,I) \ra B(N,Com,I)$ for all $Com$--algebras $I$.
\end{prop}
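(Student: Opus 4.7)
The plan is to realize $B(M,Com,I)$ as the geometric realization of the simplicial symmetric sequence $B_*(M,Com,I)$ with $B_s(M,Com,I) = M \circ Com^{\circ s} \circ I$, and then reduce both assertions to their levelwise analogues via Reedy cofibrancy.

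First I would establish levelwise homotopy invariance: for each fixed $s\geq 0$, the functor $(M,I) \mapsto M \circ Com^{\circ s} \circ I$ sends weak equivalences in either variable to weak equivalences in $\Sym(Sp)$ under the stated cofibrancy hypotheses. Using the coproduct-of-orbits formula $(A \circ B)(n) = \bigvee_{r}A(r)\sm_{\Sigma_r} B^{\otimes r}(n)$, this reduces to showing that $A(r) \sm_{\Sigma_r}(-)^{\otimes r}$ preserves weak equivalences between cofibrant inputs. This is exactly where the \emph{positive} stable model structure on symmetric spectra fixed in \secref{MODELSTRUCTURE SEC} is essential: it is designed so that for positively cofibrant $X \in Sp$ (and levelwise for cofibrant objects of $\Sym(Sp)$) the smash power $X^{\sm r}$ is suitably $\Sigma_r$-cofibrant, so that strict orbits $(-)_{\Sigma_r}$ agree with homotopy orbits and are therefore homotopy invariant.

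Second, I would pass from the levelwise statements to the realization. A levelwise weak equivalence between Reedy cofibrant simplicial objects in $\Sym(Sp)$ realizes to a weak equivalence, and a Reedy cofibrant levelwise cofibration sequence realizes to a cofibration sequence; so it suffices to verify that $B_*(M,Com,I)$ is Reedy cofibrant whenever $M$ is cofibrant in $\Sym(Sp)$ and $I$ is cofibrant in $Sp$. The latching computation for the two-sided bar construction is standard: the degeneracy structure is driven by the operad unit $\Ss(1) \to Com$, which is the inclusion of a summand and hence a cofibration of symmetric sequences, and cofibrancy of $M$ and $I$ propagates through $\circ$ to give a cofibration at each latching map.

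For part (b), I would replace the homotopy cofibration sequence $L \to M \to N$ up to weak equivalence by a levelwise cofibration of cofibrant right $Com$-modules; then applying $(-) \circ Com^{\circ s} \circ I$ produces a Reedy cofibrant simplicial object that is levelwise a cofibration sequence, and realization yields the desired cofibration sequence of spectra. The main obstacle is the first step: the homotopy invariance of $M(r) \sm_{\Sigma_r} I^{\sm r}$ fails in the ordinary stable model structure, essentially because $Com(r) = \Ss$ carries a trivial $\Sigma_r$-action and so supplies no freeness on its own. The positive model structure is built precisely to force enough freeness into the cofibrant inputs that strict orbits become homotopically correct; once this is in place, the rest is bookkeeping with the Reedy structure of two-sided bar constructions.
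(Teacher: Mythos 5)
Your proposal is correct and follows essentially the standard argument: the paper itself gives no proof, citing \cite[Thm.~2.11]{kuhn pereira}, and your reduction to levelwise homotopy invariance of $A(r)\sm_{\Sigma_r}(-)^{\otimes r}$ via positive cofibrancy, combined with Reedy cofibrancy of the two-sided bar construction, is exactly how the result is established there (resting on the smash-power/orbit theorems of Harper, Harper--Hess, and Pereira). You correctly identify the positive model structure as the crux, and your treatment of (b) via the fact that $\circ$ commutes with colimits in the first variable is the right mechanism.
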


The following definition of `the spectrum of derived indecomposables of a commutative ring spectrum' goes back to \cite{basterra}, with antecedents in the work of Andr\'e and Quillen.   

Note that $\Ss(1)$ is a $Com$-bimodule, via the map of operads $Com \ra \Ss(1)$, and then that $\Ss(1) \circ_{Com}I$ identifies as the pushout $I/I^2$ in the diagram
\begin{equation*}
\SelectTips{cm}{}
\xymatrix{
 I \sm I  \ar[d] \ar[r] & I \ar[d]  \\
{*} \ar[r] & I/I^2, }
\end{equation*}

\begin{defn} Given $I \in \ComAlg$, let $TQ(I) = B(\Ss(1),Com, I^c)$.
\end{defn}

 $TQ(I)$ can by viewed as the left derived circle product $\Ss(1) \ca^L I$, where we have used $B(\Ss(1), Com, Com)$ as our cofibrant replacement for $\Ss(1)$ in $\MCom$.  In \secref{taq is stablization sec}, we will find an alternative cofibrant replacement.

\section{A model for $K \otimes I$ using $Com$--bimodules} \label{tensor sec}

The category $\ComAlg$ is tensored and cotensored over the category $\T$ of based simplicial sets: given $K \in \T$, there is a natural isomorphism
$$ \Map_{\ComAlg}(K \otimes I, J) \simeq \Map_{\T}(K, \Map_{\ComAlg}(I,J)).$$

In this section, we show that $K \otimes I$ has the form $M \circ_{\Com}I$, with $M$ a cofibrant right $Com$-module that is a functor of $K$.  This is an elaboration of old observations of ours in \cite{K tensor R}, but is presented in a way that better allows for generalizations.

\begin{defn}  Given $K \in \T$, define a $Com$-bimodule $(K \otimes Com)$ as follows. 

Let $(K \otimes Com)(n) = K^{\sm n} \sm \Ss$.  
Given an epimorphism $\phi: \rr \ra \n$, one has an associated diagonal map
$$ \Delta_{\phi}: K^{\sm n} \ra K^{\sm r}$$ 
and an evident homeomorphism
$$ \mu_{\phi}: K^{|\phi^{-1}(1)|} \sm \cdots \sm K^{|\phi^{-1}(n)|} \xra{\sim} K^r.$$
The maps $\Delta_{\phi}$ induce a right $Com$-module structure on $(K \otimes Com)$, while the maps $\mu_{\phi}$ induce a left $Com$-module structure.
\end{defn}

\begin{defn}  Define a natural algebra map $\Phi_{K,I}: K \otimes I \ra (K\otimes Com)\circ_{Com} I$ to be the adjoint of the composite
\begin{equation*}
\begin{split}
K = \Map_{\T}(S^0,K) & \ra \Map_{Com\text{-bimod}}(S^0 \otimes Com, K \otimes Com) \\ 
 & \ra \Map_{\ComAlg}((S^0 \otimes Com) \circ_{Com}I, (K \otimes Com)\circ_{Com}I) \\
 & = \Map_{\ComAlg}(I, (K \otimes Com)\circ_{Com}I).
\end{split}
\end{equation*}
Note that we have used that $(S^0 \otimes Com) = Com$, so that $(S^0 \otimes Com) \circ_{Com}I = I$.
\end{defn}

\begin{thm} \label{tensor thm} $\Phi_{K,I}: K \otimes I \ra (K\otimes Com)\circ_{Com} I$ is an isomorphism in $\ComAlg$.
\end{thm}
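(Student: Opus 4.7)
The plan is to prove the theorem by showing that $(K\otimes Com)\circ_{Com}I$ satisfies the defining universal property of $K\otimes I$, and that $\Phi_{K,I}$ corresponds to the identity under this identification. The base case $K = S^0$ is straightforward: directly from the definitions, $(S^0\otimes Com)(n) = (S^0)^{\sm n}\sm \Ss = \Ss$, and both the right-module diagonals $\Delta_\phi$ and the left-module homeomorphisms $\mu_\phi$ reduce to identities on $\Ss$, so $S^0\otimes Com = Com$ as $Com$-bimodules. Hence $(S^0\otimes Com)\circ_{Com}I = Com\circ_{Com}I = I$, and $\Phi_{S^0,I}$ is the identity of $I$ by construction.

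The main step is to establish, for every $J\in\ComAlg$, a natural bijection
$$\Map_{\ComAlg}((K\otimes Com)\circ_{Com}I,\, J) \cong \Map_{\T}(K, \Map_{\ComAlg}(I,J)),$$
under which $\Phi_{K,I}$ corresponds to the identity of $\Map_{\ComAlg}(I,J)$. By the adjunction between the functor $M\mapsto M\circ_{Com}I$ from $Com$-bimodules to $\ComAlg$ and a suitable ``endomorphism'' $Com$-bimodule $\mathrm{End}(I,J)$ whose level-$1$ component is $\Map_{\ComAlg}(I,J)$, this reduces to showing $\Map_{Com\text{-bimod}}(K\otimes Com,\, \mathrm{End}(I,J)) \cong \Map_{\T}(K, \mathrm{End}(I,J)(1))$. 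From the explicit form of $K\otimes Com$, any bimodule map $f \colon K\otimes Com \to N$ has its level-$n$ component forced, via the left $Com$-action (the $\mu_\phi$), to equal the $n$-fold $\mu^N$-product of its level-$1$ component $f_1 \colon K \to N(1)$, so the data of $f$ is controlled by $f_1$.

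The main obstacle is the careful verification that the right-module compatibilities on $f$ (via the diagonals $\Delta_\phi$) hold automatically when the target bimodule is $\mathrm{End}(I,J)$. Concretely, one must check that the bimodule axioms in $\mathrm{End}(I,J)$ encode precisely the commutativity and associativity of multiplication in $J$, so that any based map $K \to \Map_{\ComAlg}(I,J)$ extends uniquely to a $Com$-bimodule map from $K\otimes Com$. This interplay between the two $Com$-actions on $K\otimes Com$ is the technical heart of the argument; it is also essentially what makes the result depend on $Com$'s specific structure, as opposed to generalizing directly to an arbitrary reduced operad.
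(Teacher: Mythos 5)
Your approach is genuinely different from the paper's. The paper first establishes the isomorphism for free algebras $I = Com \circ X$ by a direct computation (using that $K \otimes (Com \circ X) = Com \circ (K \sm X)$, formal from tensoring over $\T$), and then extends to all $I$ by writing $I$ as the reflexive coequalizer of $Com \circ Com \circ I \rightrightarrows Com \circ I$ and observing that both $K \otimes \text{\underline{\hspace{.1in}}}$ and $(K \otimes Com) \circ_{Com} \text{\underline{\hspace{.1in}}}$ preserve reflexive coequalizers (via \lemref{circ properties lem}(d)). You instead verify the defining universal property of $K \otimes I$ directly, via an adjunction with a coendomorphism bimodule. That strategy can be made to work and has the virtue of exhibiting the universal property explicitly, but it requires more infrastructure: one must actually construct the bimodule $\mathrm{End}(I,J)$, prove the claimed adjunction $\Map_{\ComAlg}(M \circ_{Com} I, J) \cong \Map_{Com\text{-bimod}}(M, \mathrm{End}(I,J))$, and then run the level-by-level analysis you sketch. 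The paper's route via free algebras and the monadicity of $\ComAlg$ sidesteps all of this.

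Two substantive corrections to your sketch. First, the level-$1$ component of the coendomorphism bimodule should be $\Map_{Sp}(I,J)$, not $\Map_{\ComAlg}(I,J)$: a bimodule map $K \otimes Com \to \mathrm{End}(I,J)$ gives at level $1$ a map $K \to \Map_{Sp}(I,J)$, and it is only after imposing compatibility with the right $Com$-module structure (which encodes the multiplication of $I$, via $\Delta_\phi$ precomposing with $I^{\sm n} \to I^{\sm r}$) that the image is forced to land in algebra maps. You gesture at this in your last paragraph, but as stated the identification is off. Second, your closing suggestion that this "depend[s] on $Com$'s specific structure, as opposed to generalizing directly to an arbitrary reduced operad" is not right: as the paper notes in \secref{remarks sec}, the isomorphism $K \otimes I \cong (K \otimes \Oo) \circ_{\Oo} I$ holds for general reduced $\Oo$ (with $(K \otimes \Oo)(n) = K^{\sm n} \sm \Oo(n)$), by the same argument; what fails to generalize is the \emph{cofibrancy} statement of \thmref{cofibancy of tensor thm}, which is a separate issue. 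The coendomorphism-bimodule adjunction you invoke is likewise not $Com$-specific.

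Finally, you explicitly flag the verification of the adjunction and the right-module compatibilities as the "technical heart" and do not carry it out, so as written this is a plan rather than a proof. The plan is sound, but the missing details are exactly the content, and the paper's argument (about half a page) is shorter than what it would take to fill them in.
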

\begin{proof}  We first check this when $I = Com \circ X$, with $X \in \T$. 

The category $Sp$ is tensored over $\T$, with $K \sm X$ as the tensor product of $K \in \T$ and $X \in \Sp$.  It follows formally then that $K \otimes (Com \circ X) = Com \circ (K \sm X)$.  Thus
\begin{equation*} 
\begin{split}K \otimes (Com \circ X) &= Com \circ (K \sm X) = \bigvee_{n = 1}^{\infty} (K \sm X)^{\sm n}_{\Sigma_n} = \bigvee_{n = 1}^{\infty} K^{\sm n} \sm_{\Sigma_n} X^{\sm n}\\  & = (K \otimes Com) \circ X
 = (K \otimes Com) \circ_{Com} (Com \circ X),
\end{split}
\end{equation*}
and one sees that $\Phi_{K, Com \circ X}$ is a natural isomorphism for all $X$.

As functors on $\ComAlg$, $K \otimes \text{\underline{\hspace{.1in}}}$ commutes with all colimits, and a consequence of \lemref{circ properties lem}(d) is that $(K \otimes Com) \circ_{Com} \text{\underline{\hspace{.1in}}}$ commutes with coequalizers that are reflexive in $\Sym(Sp)$.

Any $I \in \ComAlg$ is the coequalizer of $Com \circ Com \circ I \rightrightarrows Com \circ I$ which is reflexive in $\Sym(Sp)$.  Since $\Phi_{K, Com \circ I}$ and $\Phi_{K,Com \circ Com \circ I}$ are isomorphisms, it follows that $\Phi_{K,I}$ is also.
\end{proof}

Now we turn to the cofibrancy property of $K \otimes Com$.

\begin{thm} \label{cofibancy of tensor thm}  $K \otimes Com$ is a cofibrant right $Com$-module.
\end{thm}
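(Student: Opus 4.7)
The plan is to express $* \to K \otimes Com$ as a transfinite composition of cofibrations in $\MCom$, each obtained by cobase change from a generating cofibration. To do so I would first introduce the increasing filtration of $K \otimes Com$ by sub-right-$Com$-modules $\{F_r(K \otimes Com)\}_{r \geq 0}$, where $F_r(K \otimes Com)$ is the sub-right-$Com$-module generated by the levels $(K \otimes Com)(s)$ for $s \leq r$; explicitly,
$$F_r(K \otimes Com)(n) = \bigcup_{s \leq r,\ \phi \in \Epi(\n, \mathbf s)} \Delta_\phi(K^{\sm s}) \sm \Ss \ \subset \ K^{\sm n} \sm \Ss.$$
Then $F_0(K \otimes Com) = *$, $F_r(K \otimes Com)(n) = (K \otimes Com)(n)$ as soon as $n \leq r$, and $\colim_r F_r(K \otimes Com) = K \otimes Com$.

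The heart of the argument is to exhibit, for each $r \geq 1$, a pushout square in $\MCom$
$$\begin{array}{ccc}
i_r(B_r) \circ Com & \longrightarrow & F_{r-1}(K \otimes Com) \\
\downarrow & & \downarrow \\
i_r(K^{\sm r} \sm \Ss) \circ Com & \longrightarrow & F_r(K \otimes Com),
\end{array}$$
where $B_r \subset K^{\sm r} \sm \Ss$ is the $\Sigma_r$-equivariant \emph{fat diagonal}: the union of the images $\Delta_\psi(K^{\sm s} \sm \Ss)$ over all epimorphisms $\psi : \rr \to \mathbf s$ with $s < r$. Using the formula $(i_r(X) \circ Com)(n) = X \sm_{\Sigma_r} (\Epi(\n, \rr)_+ \sm \Ss)$ from \exref{M circ Com ex}, the horizontal maps send $(x, \phi) \mapsto \Delta_\phi(x)$, and the square can be checked levelwise. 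The essential combinatorial point is that a point of $K^{\sm r}$ outside $B_r$ has $r$ distinct nonbasepoint coordinates, so its image under any $\Delta_\phi$ has exactly $r$ distinct nonbasepoint values and therefore cannot lie in $F_{r-1}(K \otimes Com)(n)$; moreover, the pair $(x, \phi)$ is recovered up to the $\Sigma_r$-action from $\Delta_\phi(x)$.

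Finally, since $K \in \T$, $B_r$ is a $\Sigma_r$-equivariant subcomplex of $K^{\sm r}$, so the inclusion $B_r \hookrightarrow K^{\sm r} \sm \Ss$ is a cofibration in $\Sp$, hence $i_r(B_r) \hookrightarrow i_r(K^{\sm r} \sm \Ss)$ is a cofibration in $\Sym(Sp)$. Therefore $i_r(B_r) \circ Com \to i_r(K^{\sm r} \sm \Ss) \circ Com$ is a generating cofibration of $\MCom$, each step $F_{r-1}(K \otimes Com) \hookrightarrow F_r(K \otimes Com)$ is a cofibration by cobase change, and $* \to K \otimes Com$ is a cofibration as the resulting transfinite composition. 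I expect the main obstacle to be the careful verification of the pushout square, in particular identifying the preimage of $F_{r-1}(K \otimes Com)$ in $i_r(K^{\sm r} \sm \Ss) \circ Com$ as precisely $i_r(B_r) \circ Com$.
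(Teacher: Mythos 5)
Your proof is correct, but it takes a genuinely different route from the paper's. The paper's proof reduces first to the case where $K$ is a discrete based set (via geometric realization) and then to a finite set $\m_+$ (via filtered colimits), and then observes that the epi--mono factorization of set maps exhibits $\m_+\otimes Com$ as a \emph{free} right $Com$-module $\Mono_m\circ Com$ on a cofibrant symmetric sequence; cofibrancy is then immediate. Your proof instead works with $K$ directly, filtering $K\otimes Com$ by the ``at most $r$ distinct coordinates'' subobjects and exhibiting each step $F_{r-1}\hra F_r$ as a cobase change of the generating cofibration $i_r(B_r\sm\Ss)\circ Com\ra i_r(K^{\sm r}\sm\Ss)\circ Com$, where $B_r$ is the fat diagonal. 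Your filtration is in fact exactly the canonical increasing filtration of \secref{com module filt sec} specialized to $K\otimes Com$ (compare diagram~(\ref{filtration diagram}) and \lemref{K filtration lem}), and your pushout argument is in effect a direct re-derivation of the relevant case of \propref{cofib filt prop}. What each buys: the paper's argument is short and self-contained, but leans on discretization and on the fact that $Com(n)=\Ss$, which is why it does not extend to general operads (as noted in \secref{remarks sec}); your argument avoids discretization and organizes the proof around the same filtration the paper later develops, so it gives more structural information (e.g.\ it makes the identification of the associated graded immediate), at the cost of the somewhat fussy combinatorial verification of the levelwise pushout, which you correctly flag. One small correction: the cofibration you want is $B_r\sm\Ss\hra K^{\sm r}\sm\Ss$, not $B_r\hra K^{\sm r}\sm\Ss$.
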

\begin{proof}  Note that if $K = |K_*|$ then $K \otimes Com = |K_* \otimes Com|$, and thus it suffices to proof the theorem when $K$ is a based set.  As any based set is the filtered colimit of finite based sets of the form $\m_+$, it suffices to check the theorem when $K = \m_+$.  

Now we observe that maps have epi-mono factorizations:
$$ \m^n = \Map_{Sets}(\n,\m) = \coprod_{r} \Mono(\rr, \m) \times_{\Sigma_r} \Epi(\n,\rr)$$
so that 
$$(\m_+ \otimes Com)(n) = (\m^n)_+ \sm \Ss = \bigvee_{r} \Mono(\rr,\m)_+ \sm_{\Sigma_r} P_r(n) = (\Mono_m \circ Com)(n),$$
where $\Mono_m \in \Sym(Sp)$ is given by $\Mono_m(r) = \Mono(\rr,\m)_+ \sm \Ss$. The cofibrancy of $\Mono_m$  in $\Sym(Sp)$ then implies that $(\m_+ \otimes Com) = \Mono_m \circ Com$ is a cofibrant right $Com$-module.

\end{proof}

\section{$TQ(I)$ as stablization.}  \label{taq is stablization sec}

\begin{lem} In $Sp$, there are weak equivalences 
$\displaystyle \colim_k \Sigma^{-k} S^{kn} \sm \Ss \simeq
\begin{cases}  \Ss \text{ if } n= 1 \\ * \text{ if } n > 0
\end{cases}
$.
\end{lem}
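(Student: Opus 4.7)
The plan is to split into cases based on $n$ and use a connectivity argument for $n \geq 2$. For $n = 1$, every term in the sequence is $\Sigma^{-k}(S^k \sm \Ss) \simeq \Ss$ via the canonical identification $S^k \sm \Ss \simeq \Sigma^k \Ss$, and under this identification the structure maps of the colimit system become identities (or at least weak equivalences). Hence the colimit is $\Ss$.

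For $n \geq 2$ (which I take to be the intended content of the second displayed case), the key observation is that $\Sigma^{-k}(S^{kn} \sm \Ss) \simeq \Sigma^{k(n-1)} \Ss$ has connectivity $k(n-1) - 1$, and since $n - 1 \geq 1$ this tends to $+\infty$ with $k$. Since homotopy groups commute with sequential filtered colimits of spectra, for each integer $i$ the groups $\pi_i(\Sigma^{-k}(S^{kn}\sm \Ss))$ vanish for $k$ large enough, and hence $\pi_i$ of the colimit vanishes. This makes the colimit weakly contractible.

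The only obstacle I expect is administrative: one must verify that the plain $\colim$ in the chosen model structure on $Sp$ correctly computes the homotopy colim here. If the transition maps that are implicit in the statement are not cofibrations, the standard fix is to replace the sequential diagram by its mapping telescope, which is cofibrant in each degree; the connectivity argument then goes through unchanged. Neither case requires any genuine computation beyond bookkeeping of suspension/desuspension shifts, so I expect the write-up to be short.
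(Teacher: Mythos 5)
The paper states this lemma without any proof at all, calling it the ``evident lemma'' two lines later; so there is nothing to compare your argument against. Your proof is correct, and it is exactly the argument the author clearly has in mind. You also rightly read the second case as ``$n > 1$'' rather than the typographical ``$n > 0$'' in the paper. Two small remarks on the write-up. First, for $n = 1$ you should say a word about why the transition maps are weak equivalences: they are the structure maps of the system $\Sigma^{-k}(S^k \otimes Com)(1)$, and in the $n=1$ slot they are induced by the identity on $S^1$, so this is immediate. Second, for $n \geq 2$ you could also observe that the transition map $\Sigma^{-k} S^{kn}\sm\Ss \to \Sigma^{-(k+1)}S^{(k+1)n}\sm\Ss$ is induced by the diagonal $S^1 \to S^n$, which is already nullhomotopic; this gives contractibility of the colimit without even invoking connectivity. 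Either route is fine. Your caution about plain colimit versus homotopy colimit is well placed but not a real obstacle here: the transition maps can be taken to be cofibrations of symmetric spectra (or one uses the telescope, as you say), so the sequential colimit is a homotopy colimit and $\pi_*$ commutes with it.
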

This implies the following.

\begin{prop} \label{suspension model prop} There is a weak equivalence of right $Com$-modules $$\displaystyle \colim_k \Sigma^{-k}(S^k \otimes Com) \xra{\sim} \Ss(1).$$
\end{prop}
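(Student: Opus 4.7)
The plan is to deduce the Proposition directly from the Lemma by checking level-wise. Since weak equivalences in $\MCom$ are defined as level-wise weak equivalences in $\Sym(Sp)$ (see \secref{MODELSTRUCTURE SEC}), it suffices to exhibit a map of right $Com$-modules $\colim_k \Sigma^{-k}(S^k \otimes Com) \to \Ss(1)$ whose component at each level is a weak equivalence of spectra.

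I would construct the map level by level. At level $n \geq 1$ the source reads $\colim_k \Sigma^{-k} S^{kn} \sm \Ss$, while $\Ss(1)$ is $\Ss$ at level $1$ and $*$ elsewhere. I take the map to be the canonical stabilization equivalence $\colim_k \Sigma^{-k} S^k \sm \Ss \simeq \Ss$ at $n=1$ and the zero map at every other level. Compatibility with the right $Com$-module structure maps $\Delta_\phi: M(n) \to M(r)$ is automatic whenever $r \geq 2$ since the target is a point, so the only nontrivial case is an epimorphism $\phi: {\bf n} \to {\bf 1}$ with $n \geq 2$: there one needs the composite into $\Ss$ induced by the diagonal $S^{kn} \to S^k$ to equal the zero map. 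This holds up to homotopy because the Lemma gives that $\colim_k \Sigma^{-k} S^{kn} \sm \Ss$ is contractible for $n \geq 2$, and after a level-wise fibrant replacement of $\Ss(1)$ in $\MCom$ the agreement can be promoted to a strict equality. Granted the map, the level-wise weak equivalence is exactly what the Lemma asserts.

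The only real subtlety is producing the map strictly in $\MCom$ rather than only in the homotopy category. This is where the contractibility statement of the Lemma for $n > 1$ is used essentially, not just its identification of the $n=1$ level; without that vanishing, the diagonal maps $\Delta_\phi$ from higher levels would obstruct the construction. Once the map is in place, no further argument is needed.
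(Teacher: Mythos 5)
Your overall strategy matches the paper's: the map is checked level by level, and the preceding Lemma is the whole content. The paper itself offers no proof beyond ``This implies the following,'' so a level-wise argument is exactly what is intended.

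The issue is in your middle paragraph, where the direction of the right $Com$-module structure maps is reversed, and as a result you invent an obstruction that is not there. For an epimorphism $\phi\colon \mathbf{n} \to \mathbf{r}$ (so $r \le n$), the action $M \circ Com \to M$ produces, at level $n$, a map $M(r) \to M(n)$ from the \emph{smaller} level to the \emph{larger} one; this is visible from $(M\circ Com)(n) = \bigvee_{r} M(r)\sm_{\Sigma_r} Com^{\otimes r}(n)$, and from the explicit definition of $K\otimes Com$, where the diagonal is $K^{\sm r} \to K^{\sm n}$. (The sentence ``$\Delta_{\phi}\colon M(n)\to M(r)$'' in \secref{background sec} is a misprint; it contradicts the description of $\MCom$ as functors on $\Epi^{op}$ and the definition of $K\otimes Com$.) With the direction corrected, the compatibility square for $\phi\colon \mathbf{n}\to\mathbf{1}$ with $n\ge 2$ is
\begin{equation*}
\SelectTips{cm}{}
\xymatrix{
\colim_k \Sigma^{-k} S^{k}\sm\Ss \ar[r] \ar[d]_{\Delta_\phi} & \Ss \ar[d] \\
\colim_k \Sigma^{-k} S^{kn}\sm\Ss \ar[r] & {*}, }
\end{equation*}
which commutes for free because the lower right corner is a point, just as in the $r\ge 2$ cases you already dispatched. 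So once the level-$1$ component is chosen, the strict map of right $Com$-modules exists with no further work; the contractibility of $\colim_k \Sigma^{-k} S^{kn}$ for $n>1$ is needed only to see that the map is a level-wise weak equivalence, not to build it. I would also flag that, even had the obstruction been real, the proposed fix would not work: a level-wise fibrant replacement of $\Ss(1)$ in $\MCom$ makes the levels $n>1$ weakly contractible rather than equal to a point (if anything creating new compatibility conditions), and fibrantly replacing the target does not in general promote a homotopy between maps to a strict equality.
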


The two weak equivalences of right $Com$--modules 
$$  B(\Ss, Com, Com) \xra{\sim} \Ss(1) \xla{\sim} \colim_k \Sigma^{-k}(S^k \otimes Com)$$
now combine to show the next result.

\begin{cor} $\displaystyle TQ(I) \simeq \colim_k \Sigma^{-k} (S^k \otimes I)$
\end{cor}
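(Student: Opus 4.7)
My plan is to exploit the chain of weak equivalences of right $Com$--modules displayed just before the statement, interpreting both ends as cofibrant replacements of $\Ss(1)$ in $\MCom$. First I would check that both $B(\Ss(1),Com,Com)$ and $\colim_k \Sigma^{-k}(S^k \otimes Com)$ are cofibrant in $\Sym(Sp)$: the former by part (c) of the bar construction proposition in \secref{bar construction and TQ section} (since $\Ss(1)$ is cofibrant in $\Sym(Sp)$), and the latter because each $\Sigma^{-k}(S^k \otimes Com)$ is cofibrant by \thmref{cofibancy of tensor thm} and a filtered colimit along cofibrations preserves cofibrancy in the model structure of \secref{MODELSTRUCTURE SEC}.

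Next I would apply \propref{kp prop}(a) to the weak equivalence $\colim_k \Sigma^{-k}(S^k \otimes Com) \xra{\sim} \Ss(1)$ of \propref{suspension model prop} in the first variable of $B(-,Com,I^c)$, producing a weak equivalence of spectra
\begin{equation*}
B\bigl(\colim_k \Sigma^{-k}(S^k \otimes Com),\, Com,\, I^c\bigr) \xra{\sim} B(\Ss(1),Com,I^c) = TQ(I).
\end{equation*}
Since the source module is cofibrant in $\Sym(Sp)$, the comparison $B(M,Com,I^c) \xra{\sim} M \ca I^c$ (second proposition of \secref{bar construction and TQ section}) lets me rewrite it as $\colim_k \Sigma^{-k}(S^k \otimes Com) \ca I^c$. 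Because $\ca$ is defined as a coequalizer in $\Sym(Sp)$ and therefore commutes with colimits in the first variable by \lemref{circ properties lem}(d), this is $\colim_k \Sigma^{-k}\bigl((S^k \otimes Com) \ca I^c\bigr)$. Finally, \thmref{tensor thm} identifies $(S^k \otimes Com) \ca I^c$ with $S^k \otimes I^c$, and since $I^c \xra{\sim} I$ and $S^k \otimes (-)$ is homotopy invariant on cofibrant commutative ring spectra, the colimit is weakly equivalent to $\colim_k \Sigma^{-k}(S^k \otimes I)$, which is the desired conclusion.

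The only real obstacle is the cofibrancy bookkeeping: verifying that the filtered colimit is cofibrant in $\Sym(Sp)$ so that \propref{kp prop} applies, and confirming that $S^k \otimes (-)$ preserves enough weak equivalences to drop the cofibrant replacement at the end. Both are routine consequences of the model--categorical setup summarized in \secref{MODELSTRUCTURE SEC}, and no new conceptual input is required beyond the observation that $\Ss(1)$ admits the two parallel cofibrant replacements $B(\Ss(1),Com,Com)$ and $\colim_k \Sigma^{-k}(S^k \otimes Com)$.
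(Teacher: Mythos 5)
Your proposal is correct and follows exactly the route the paper intends: both $B(\Ss(1),Com,Com)$ and $\colim_k \Sigma^{-k}(S^k \otimes Com)$ are cofibrant replacements of $\Ss(1)$ in $\MCom$, so applying the derived circle product with $I$ to the zig-zag of weak equivalences yields the result. You have merely filled in the cofibrancy and colimit-commutation details that the paper leaves implicit.
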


We have recovered the result that the functor $TQ$ is stabilization in $\ComAlg$.  This result is well known -- see \cite{basterra mccarthy} -- but our proof here, based on the evident lemma, seems simpler and more transparent than other proofs. 

\section{A canonical decreasing filtration of right $Com$-modules} \label{tower sec}

A right $Com$-module $M$ has a natural decreasing filtration
$$ M = F^1M \la F^2M \la F^3M \la \dots$$
defined in an elementary way.

\begin{defn}  Given $M \in \MCom$, $F^nM \in \MCom$ is defined by letting
$$(F^nM)(r) = 
\begin{cases} 
* & \text{if } r < n \\ M(n) & \text{if } r \geq n. 
\end{cases}
$$
\end{defn}

Let $\bar F^nM$ denote the cofiber of $F^{n+1}M \ra F^nM$.  The following is clear by inspection.

\begin{lem}  $\bar F^nM = i_n(M(n)) \circ \Ss(1)$, as right $Com$-modules.
\end{lem}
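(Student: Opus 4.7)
The plan is a direct levelwise calculation: compute both sides as symmetric sequences and then match them as right $Com$--modules.

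First I would compute $i_n(M(n)) \circ \Ss(1)$ using the formula for the circle product. Since $i_n(M(n))$ is concentrated in degree $n$, only the $r = n$ summand of
$$(i_n(M(n)) \circ \Ss(1))(k) = \bigvee_{r \geq 0} i_n(M(n))(r) \sm_{\Sigma_r} \Ss(1)^{\otimes r}(k)$$
contributes. Because $\Ss(1) = i_1(\Ss)$ is concentrated in degree $1$, the tensor power $\Ss(1)^{\otimes n}$ is concentrated in degree $n$, where it is $\Sigma_{n+} \sm \Ss$. Therefore $(i_n(M(n)) \circ \Ss(1))(k)$ is trivial for $k \neq n$, and for $k = n$ it is $M(n) \sm_{\Sigma_n} (\Sigma_{n+} \sm \Ss) \cong M(n)$ with the natural $\Sigma_n$--action.

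Next I would compute $\bar F^n M$ levelwise directly from the definition of $F^{\bullet} M$. For $k < n$, both $(F^{n+1}M)(k)$ and $(F^n M)(k)$ are trivial, so the cofiber is trivial. For $k = n$, the map $* = (F^{n+1}M)(n) \ra (F^n M)(n) = M(n)$ has cofiber $M(n)$. For $k > n$, the map $(F^{n+1}M)(k) \ra (F^n M)(k)$ is already an isomorphism, so the cofiber is trivial. Hence $\bar F^n M$ has the same underlying symmetric sequence as $i_n(M(n)) \circ \Ss(1)$, namely $i_n(M(n))$.

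Finally I would compare the right $Com$--module structures. Both sequences are supported in degree $n$, so the only potentially nontrivial structure maps $\Delta_\phi$ arise from $\phi \in \Sigma_n$ acting on the level-$n$ component. On $\bar F^n M$ this action is inherited from the $\Sigma_n$--action on $M(n)$ coming from $M$; on $i_n(M(n)) \circ \Ss(1)$ it is the tautological $\Sigma_n$--action on $M(n) \sm_{\Sigma_n} \Sigma_{n+} \cong M(n)$. All other structure maps $\Delta_\phi$ (those with source or target in a degree $\neq n$) vanish automatically, since one side is $*$.

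I do not expect any real obstacle; the argument is essentially bookkeeping. The only point requiring a moment of care is verifying that the original structure maps $\Delta_\phi: M(n) \ra M(k)$ with $k < n$ in $M$ become zero in the quotient $F^n M / F^{n+1} M$, which is immediate since $(F^n M / F^{n+1} M)(k) = *$ for $k < n$.
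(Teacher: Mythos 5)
Your proof is correct and supplies exactly the levelwise inspection that the paper leaves implicit (the paper simply declares the lemma ``clear by inspection''). One remark worth making explicit: your computation tacitly uses the definition $(F^nM)(r) = M(r)$ for $r \geq n$, whereas the paper's displayed formula reads $M(n)$ in that case; this must be a typo, since otherwise $F^1M = M$ fails and the maps $(F^{n+1}M)(k) \to (F^nM)(k)$ for $k > n$ would not be isomorphisms, so your reading is the correct one.
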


\begin{prop} \label{decreasing prop} If $I$ is cofibrant in $\ComAlg$ and $M \in \MCom$ is cofibrant in $\Sym(Sp)$, there is a decreasing filtration on $B(M,Com,I)$, and fibration sequences
$$ F^{n-1}B(M,Com,I) \ra F^nB(M,Com,I) \ra M(n) \sm_{\Sigma_n} TQ(I)^{\sm n}.$$
\end{prop}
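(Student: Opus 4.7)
The plan is to apply the functor $B(-,Com,I^c)$ to the canonical decreasing filtration $M = F^1M \la F^2M \la F^3M \la \dots$ and identify the successive cofibers using the preceding lemma. Set $F^nB(M,Com,I) := B(F^nM, Com, I^c)$. Each $F^nM$ is again cofibrant in $\Sym(Sp)$, since its levels are either trivial or levels of $M$, so we remain within the hypotheses of \propref{kp prop}.

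The filtration maps fit into homotopy cofibration sequences $F^{n+1}M \ra F^nM \ra \bar F^n M$ of right $Com$--modules that are cofibrant in $\Sym(Sp)$. Applying \propref{kp prop}(b) yields homotopy cofibration sequences of spectra
$$B(F^{n+1}M, Com, I^c) \ra B(F^nM, Com, I^c) \ra B(\bar F^nM, Com, I^c),$$
which in the stable setting are equivalently homotopy fibration sequences. It remains to identify the third term with $M(n) \sm_{\Sigma_n} TQ(I)^{\sm n}$.

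By the lemma preceding the proposition, $\bar F^nM = i_n(M(n)) \circ \Ss(1)$. Because each simplicial level $B_s(-,Com,I^c) = (-) \circ Com^{\circ s} \circ I^c$ applies the circle product on the left, and geometric realization commutes with the relevant smash products and $\Sigma_n$--orbits under cofibrancy, the bar construction is compatible with left circle products against a symmetric sequence concentrated in a single degree:
$$B(i_n(M(n)) \circ \Ss(1), Com, I^c) \simeq i_n(M(n)) \circ B(\Ss(1), Com, I^c) = i_n(M(n)) \circ TQ(I).$$
The right-hand side is concentrated in degree $0$ and evaluates to $M(n) \sm_{\Sigma_n} TQ(I)^{\sm n}$, which is the desired cofiber.

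The main technical obstacle is justifying this last commutation. One must verify that realization of the simplicial object underlying $B(\Ss(1), Com, I^c)$ commutes both with the $n$--fold external tensor product and with the $\Sigma_n$--orbit construction (twisted by $M(n)$) that together constitute $i_n(M(n)) \circ (-)$. This is standard in the positive model structure on $\Sp$ of \cite{hss} given cofibrancy of $M(n)$ and of $I^c$ in $\Sp$, provided the simplicial object is Reedy--cofibrant. No new homotopical input is needed beyond careful bookkeeping of the model--theoretic hypotheses that underwrite \propref{kp prop}, with references to \cite{fresse, harper bar construction, kuhn pereira} available when full detail is required.
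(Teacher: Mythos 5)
Your proof takes the same route as the paper's: apply $B(-,Com,I^c)$ to the decreasing filtration, use \propref{kp prop}(b) together with the preceding lemma $\bar F^nM = i_n(M(n))\circ\Ss(1)$ to produce the cofiber sequences, and then commute the bar construction past the left circle product with $i_n(M(n))$ to land on $M(n)\sm_{\Sigma_n}TQ(I)^{\sm n}$. The one place you overcomplicate things is the final paragraph: the identification $B(i_n(M(n))\circ\Ss(1),Com,I^c) = i_n(M(n))\circ B(\Ss(1),Com,I^c)$ is a point-set \emph{isomorphism}, not merely a weak equivalence, since $i_n(M(n))\circ(-)$ is built from finite smash powers and $\Sigma_n$-coinvariants (i.e.\ colimits), both of which commute strictly with geometric realization; the paper writes $=$ for exactly this reason, and no Reedy-cofibrancy bookkeeping is needed at this step.
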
   
\begin{proof}  Combining the lemma and \propref{kp prop}, the cofiber of 
$$F^{n-1}B(M,Com,I) \ra F^nB(M,Com,I)$$ is $B(i_n(M(n)) \circ \Ss(1), Com, I)$. We rewrite this:
\begin{equation*}
\begin{split} 
B(i_n(M(n)) \circ \Ss(1), Com, I) & = i_n(M(n)) \circ B(\Ss(1), Com, I) \\
   & = i_n(M(n)) \circ TQ(I) = M(n) \sm_{\Sigma_n} TQ(I)^{\sm n}.
\end{split}
\end{equation*}
\end{proof}

Note that the decreasing filtration of $\Com$ is a filtration of $Com$-bimodules. If we let $I^n = B(F_nCom,Com,I)$, the proposition has the following corollary.

\begin{cor} If $I$ is cofibrant in $\ComAlg$, There is a decreasing filtration $I \simeq I^1 \ra I^2 \ra I^3 \ra \dots$, and fibration sequences 
$$ I^{n-1} \ra I^n \ra Com(n) \sm_{\Sigma_n} TQ(I)^{\sm n}.$$
\end{cor}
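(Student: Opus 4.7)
The plan is to apply \propref{decreasing prop} with $M = Com$. The hypotheses are met: $Com$ is cofibrant in $\Sym(Sp)$ (noted in \secref{MODELSTRUCTURE SEC}) and $I$ is cofibrant in $\ComAlg$ by assumption. Since $Com(n) = \Ss$ for all $n \geq 1$, the proposition's homotopy fiber sequences
\[ F^{n-1}B(Com,Com,I) \to F^n B(Com,Com,I) \to Com(n) \sm_{\Sigma_n} TQ(I)^{\sm n} \]
already have the third term of the desired shape; setting $I^n := B(F^nCom, Com, I)$ gives a tower with the correct fibers at the level of spectra.

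To upgrade this to a tower in $\ComAlg$, I would verify that each $F^nCom$ inherits a $Com$--bimodule structure from the $Com$--bimodule $Com$. The decreasing filtration is by arity, and the left operadic multiplication on $Com$ sums input arities (so any operation whose inputs collectively sit in arity $\geq n$ produces an output of arity $\geq n$); the right $Com$--module structure on $F^n Com$ was, by construction, built to be compatible with arity truncation. Hence each $F^n Com \to F^{n-1}Com$ is a map of $Com$--bimodules, so $I^n = B(F^nCom,Com,I)$ lies in $\ComAlg$ by the general remark in \secref{bar construction and TQ section} that bar constructions of bimodules against algebras are algebras, and $I^{n-1}\to I^n$ is a $Com$--algebra map by naturality. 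To identify the base of the tower, I would note that $Com(0) = *$ already holds for the reduced operad, so $F^1Com = Com$, whence $I^1 = B(Com,Com,I) \simeq Com \circ_{Com} I = I$ by the bar-construction weak equivalence recorded in \secref{bar construction and TQ section}.

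I expect the main subtlety to lie in the bookkeeping confirming that the canonical decreasing filtration on right $Com$--modules, when specialized to $Com$, lifts through the $Com$--bimodule structure on $Com$ to a filtration by sub-bimodules, and that with this extra structure the tower $F^\bullet B(Com,Com,I)$ of \propref{decreasing prop} genuinely lives in $\ComAlg$ rather than merely in $\Sp$. Once this is verified, the corollary is a direct substitution of $M = Com$ into \propref{decreasing prop} together with the identification of $I^1$ above.
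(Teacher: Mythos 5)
Your proposal is correct and matches the paper's approach: the paper derives the corollary by exactly this specialization of \propref{decreasing prop} to $M = Com$, with the bimodule observation you verify stated as the remark immediately preceding the corollary. Your extra checks (that $F^nCom$ is a sub-bimodule because left multiplication cannot lower arity below $n$, and that $I^1 = B(Com,Com,I)\simeq I$) are exactly the bookkeeping the paper leaves implicit.
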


\begin{rem} In \cite{kuhn pereira}, the functors $I \mapsto I^n$ are shown to have lots of nice additional structure; in particular, there are natural maps $(I^m)^n \ra I^{mn}$ induced by the operad structure on $Com$.
\end{rem}

\begin{rem} Let $p_nB(M,Com,I)$ be the cofiber of $B(F_{n+1}M,Com,I) \ra B(M,Com,I)$, so there is a tower under $B(M,Com,I)$:
$$  \cdots \ra p_3B(M,Com,I) \ra p_2B(M,Com,I) \ra p_1B(M,Com,I).$$
\propref{decreasing prop} and the fact that $TQ(I)$ is stablization of $\ComAlg$ implies that this is the Goodwillie tower of the functor $$B(M,Com,\text{\underline{\hspace{.1in}}}): \ComAlg \ra Sp.$$ Similarly, if $p_nI = B(Com/F_{n+1}Com, Com, I)$ then $I^{n+1} \ra I \ra p_nI$ is a homotopy fibration sequence in $\ComAlg$, and the resulting tower under $I$,
$$  \cdots \ra p_3I \ra p_2I \ra p_1I,$$
identifies with the Goodwillie tower of the identity functor on $\ComAlg$.
See \cite{kuhn pereira} for a bit more discussion of this.  Towers like this were also studied in \cite{harper and hess}.
\end{rem}

\section{A canonical increasing filtration of right $Com$--modules} \label{com module filt sec}

In this section, we define a natural increasing filtration of the category $\MCom$, and develop some useful properties.

Recall that $i_n: \Sigma_n\text{-}Sp \ra \Sym(Sp)$ is the functor sending a spectrum $X$ with $\Sigma_n$-action to the symmetric sequence which is $X$ at level $n$ and $*$ at all other levels.  Given any $M \in \Sym(Sp)$, one has a natural inclusion $i_n(M(n)) \ra M$.  For $M \in \MCom$, this inclusion induces a natural map in $\MCom$
$$ e_n: i_n(M(n))\circ Com \ra M.$$
Finally, we recall that $i_n(M(n))\circ Com = M(n)\sm_{\Sigma_n}P_n$, where $P_n(r) = \Epi(\rr,\n)_+ \sm \Ss$.

\begin{defn}  Given $M \in \MCom$, we define an increasing filtration of $M$ by right $Com$-modules, 

\begin{equation*}
\SelectTips{cm}{}
\xymatrix{
F_1M \ar[dr]_-<<{g_1} \ar[r]^{f_1} & F_2M \ar[d]^-{g_2} \ar[r]^{f_2} & F_3M \ar[r]^{f_3} \ar[dl]^<<{g_3} & \dots \\
&M & &  }
\end{equation*}
as follows.

Let $F_1M = i_1(M(1)) \circ Com$, with $g_1 = e_1: i_1(M(1))\circ Com \ra M$.  

Suppose that $n>1$, and we have defined $g_{n-1}:F_{n-1}M \ra M$.

The map $g_{n-1}$ induces a map $g^n_{n-1}:i_n((F_{n-1}M)(n)) \circ Com \ra i_n(M)(n))\circ Com$.  

We now construct the following commutative diagram: 
\begin{equation} \label{filtration diagram}
\SelectTips{cm}{}
\xymatrix{
i_n((F_{n-1}M)(n)) \circ Com \ar[d]_{g_{n-1}^n} \ar[r]^-{e_n} & F_{n-1}M \ar[d]_{f_{n-1}} \ar@/^1pc/[ddr]^{g_{n-1}}  & \\
i_n(M(n)) \circ Com \ar[r] \ar@/_/[rrd]^-{e_n} & F_nM \ar[dr]^-{g_n} & \\
& & M.}
\end{equation}
In this diagram, the inner square is defined to be a pushout, defining $F_nM$ and $f_{n-1}$. Since the outer square commutes, there is an induced map $g_n$ making the diagram commute. 

This completes the recursive step of our definition..
\end{defn}

The next proposition implies that $\displaystyle \colim_n F_nM = M$.

\begin{prop}  $g_n: (F_nM)(r) \ra M(r)$ is an isomorphism if $n\geq r$.
\end{prop}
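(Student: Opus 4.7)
The plan is to prove the claim by induction on $n$, exploiting the explicit level-wise description of $i_n(X) \circ Com$ together with the pushout definition of $F_nM$. The key preliminary observation is that for any $X \in \Sigma_n$-$\Sp$ one has
$$ (i_n(X) \circ Com)(r) = X \sm_{\Sigma_n} P_n(r), \quad P_n(r) = \Epi({\bf r},{\bf n})_+ \sm \Ss. $$
Since $\Epi({\bf r},{\bf n}) = \emptyset$ for $r<n$ and $\Epi({\bf n},{\bf n}) = \Sigma_n$, we get $(i_n(X) \circ Com)(r) = *$ for $r < n$ and $(i_n(X) \circ Com)(n) = X$. A second preliminary observation is that the map $e_n: i_n(X) \circ Com \to M$, being the free right $Com$-module extension of the canonical inclusion $i_n(M(n)) \hookrightarrow M$, reduces at level $n$ to the identity on $M(n)$ (only the summand corresponding to $\mathrm{id}_{\bf n} \in \Epi({\bf n},{\bf n})$ contributes, and on this summand $e_n$ is the operadic action against $Com(1)^{\sm n} = \Ss$).

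\textbf{Base case} ($n = 1$): By definition $F_1M = i_1(M(1)) \circ Com$, so $(F_1M)(1) = M(1) \sm_{\Sigma_1} P_1(1) = M(1)$, and $g_1$ is $e_1$, which at level $1$ is the identity.

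\textbf{Inductive step:} Assume $g_{n-1}(r): (F_{n-1}M)(r) \to M(r)$ is an isomorphism for all $r \leq n-1$. Apply the level-$r$ functor to the pushout square (\ref{filtration diagram}) defining $F_nM$. If $r < n$, both $(i_n((F_{n-1}M)(n)) \circ Com)(r)$ and $(i_n(M(n)) \circ Com)(r)$ are trivial by the first observation, so the pushout at level $r$ collapses to $(F_nM)(r) = (F_{n-1}M)(r)$ and $g_n(r) = g_{n-1}(r)$, an isomorphism by hypothesis. If $r = n$, the pushout diagram at level $n$ reads
$$
\SelectTips{cm}{}
\xymatrix{
(F_{n-1}M)(n) \ar[d]_{g_{n-1}(n)} \ar[r]^-{\mathrm{id}} & (F_{n-1}M)(n) \ar[d] \\
M(n) \ar[r] & (F_nM)(n),
}
$$
where the top arrow is the identity by the second observation. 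A pushout along an identity is an isomorphism, so $(F_nM)(n) \cong M(n)$, and the commutativity of (\ref{filtration diagram}) together with $(e_n)(n) = \mathrm{id}$ on the lower slant forces $g_n(n) = \mathrm{id}$.

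The main obstacle is the identification $(e_n)(n) = \mathrm{id}$; once this is pinned down via the free/forgetful adjunction, the rest is a matter of applying the level-$r$ evaluation functor to the pushout and noting its exactness. Everything is a strict isomorphism of symmetric sequences, so the argument requires no homotopical input.
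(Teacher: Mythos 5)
Your proof is correct and follows essentially the same approach as the paper: evaluate the pushout square (\ref{filtration diagram}) at each level $r$, use that $(i_n(X)\circ Com)(r) = X\sm_{\Sigma_n}P_n(r)$ vanishes for $r<n$ and equals $X$ for $r=n$, and conclude $f_{n-1}(r)$ is an isomorphism for $r<n$ and that the level-$n$ pushout is along an identity. The only cosmetic difference is the organization of the induction (the paper fixes $r$ and inducts on $n\geq r$ starting at $n=r$, whereas you induct globally on $n$ with base $n=1$ and treat $r<n$ and $r=n$ as sub-cases), but the underlying observations are identical.
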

\begin{proof}  We fix $r$ and prove this by induction on $n$,

We first check this when $n=r$.  Diagram (\ref{filtration diagram}) evaluated on $n$ yields the commutative diagram
\begin{equation*}
\SelectTips{cm}{}
\xymatrix{
(F_{n-1}M)(n) \ar[d] \ar@{=}[r] & (F_{n-1}M)(n) \ar[d]_{f_{n-1}} \ar@/^1pc/[ddr]^{g_{n-1}}  & \\
M(n) \ar[r] \ar@{=}[rrd] & (F_nM)(n) \ar[dr]^-{g_n} & \\
& & M(n).}
\end{equation*}
Since the inner square is a pushout, it follows that $g_n$ is an isomorphism.

Now suppose that $r<n$ and we have proved that $g_{n-1}: (F_{n-1}M)(r) \ra M(r)$ is an isomorphism .  Diagram (\ref{filtration diagram}) evauated on $r$ yields the commutative diagram
\begin{equation*}
\SelectTips{cm}{}
\xymatrix{
(F_{n-1}M)(n) \sm_{\Sigma_n} P_n(r) \ar[d] \ar@{=}[r] & (F_{n-1}M)(r) \ar[d]_{f_{n-1}} \ar@/^1pc/[ddr]^{g_{n-1}}  & \\
M(n) \sm_{\Sigma_n} P_n(r) \ar[r] \ar@/_/[rrd] & (F_nM)(r) \ar[dr]^-{g_n} & \\
& & M(r).}
\end{equation*}
Since $r<n$, $P_n(n) = *$ (since $\Epi(\rr,\n) = \emptyset$), so that the left vertical map in the pushout square has the form $* \ra *$. We conclude that the right vertical map $f_{n-1}$ is an isomorphism, and thus so is $g_n$.
\end{proof}

Our next results implies that our filtration is homotopically well behaved when $M$ is a cofibrant right $Com$-module.

\begin{prop} \label{cofib filt prop} Given $M \in \MCom$, the following statements are equivalent.

(a) $M(1)$ is cofibrant in $Sp$, and each map $(F_{n-1}M)(n) \ra M(n)$ is a cofibration in $\Sigma_n\text{-}Sp$.

(b) $F_1M$ is cofibrant in $\MCom$,  and each map $F_{n-1}M \ra F_nM$ is a cofibration between cofibrant objects in $\MCom$.

(c) $M$ is cofibrant in $\MCom$.
\end{prop}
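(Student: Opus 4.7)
The strategy is to prove $(a) \Rightarrow (b) \Rightarrow (c) \Rightarrow (a)$.

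For $(a) \Rightarrow (b)$, I would proceed by induction on $n$. The base case is direct: since $M(1)$ is cofibrant in $Sp$, $i_1(M(1))$ is cofibrant in $\Sym(Sp)$, and hence $F_1 M = i_1(M(1)) \circ Com$ is cofibrant in $\MCom$. For the inductive step, condition (a) gives that $i_n((F_{n-1}M)(n)) \to i_n(M(n))$ is a cofibration in $\Sym(Sp)$, so $i_n((F_{n-1}M)(n)) \circ Com \to i_n(M(n)) \circ Com$ is a generating cofibration in $\MCom$; the defining pushout then realizes $F_{n-1}M \to F_nM$ as a cofibration between cofibrant objects. For $(b) \Rightarrow (c)$, the preceding proposition gives $M = \colim_n F_n M$ as a sequential colimit of cofibrations between cofibrant objects starting from the cofibrant $F_1 M$, so $M$ is cofibrant in $\MCom$.

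The substantive direction is $(c) \Rightarrow (a)$. The first task is to establish the key lemma that $F_n \colon \MCom \to \MCom$ preserves all small colimits. This is shown by induction: $F_1$ is a composite of the colimit-preserving functors $M \mapsto M(1)$, $i_1$, and $-\circ Com$; and the inductive definition of $F_n$ as a pushout involves only colimit-preserving operations (evaluation at level $n$, $i_n$, $-\circ Com$, and pushout itself). A useful corollary is the explicit formula $F_n(A \circ Com) \cong (\tau_{\leq n} A) \circ Com$ for $A \in \Sym(Sp)$, where $\tau_{\leq n}$ is the evident truncation of a symmetric sequence to levels $\leq n$.

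Cofibrant objects in $\MCom$ are retracts of cell objects built by transfinite composition of pushouts of generating cofibrations $A \circ Com \to B \circ Com$ with $A \to B$ a cofibration in $\Sym(Sp)$. Condition (a) is retract-closed, and using colimit-preservation of $F_{n-1}$ together with evaluation at level $n$, (a) is also closed under transfinite colimits of cofibrations. Hence it suffices to check that (a) is preserved under pushouts along generating cofibrations. For $N = M \cup_{A \circ Com} B \circ Com$ with $M$ satisfying (a), setting $U = (\tau_{\leq n-1} A \circ Com)(n)$ and $V = (\tau_{\leq n-1} B \circ Com)(n)$, colimit-preservation gives $(F_{n-1}N)(n) = (F_{n-1}M)(n) \cup_U V$. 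The crucial computation is the splitting $(A \circ Com)(n) = U \vee A(n)$ of $\Sigma_n$-spectra, coming from separating the $s = n$ wedge summand $A(n) \sm_{\Sigma_n} (\Sigma_n)_+ \sm \Ss \cong A(n)$, with the analogous splitting for $B$. This lets us factor $(F_{n-1}N)(n) \to N(n)$ as a composite $(F_{n-1}M)(n) \cup_U V \to M(n) \cup_U V \to N(n)$, in which the first arrow is a pushout of the $\Sigma_n$-cofibration $(F_{n-1}M)(n) \to M(n)$ and the second is a pushout of the $\Sigma_n$-cofibration $A(n) \to B(n)$; the composite is thus a $\Sigma_n$-cofibration.

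The main obstacle is this last step: identifying the correct two-stage factorization of $(F_{n-1}N)(n) \to N(n)$. It relies on two inputs working together — the colimit-preservation of $F_{n-1}$ (to compute $(F_{n-1}N)(n)$ as a pushout), and the wedge-summand splitting of $(A \circ Com)(n)$ at the top level (to decompose the pushout structure into the two-step form).
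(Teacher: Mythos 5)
Your proof is correct, and it takes a genuinely different route from the paper's in the hard direction. The paper closes the cycle by proving (a)$\Rightarrow$(b), (b)$\Rightarrow$(a), (b)$\Rightarrow$(c), and (c)$\Rightarrow$(b); you instead prove (a)$\Rightarrow$(b)$\Rightarrow$(c)$\Rightarrow$(a). For (c), the paper first checks condition (b) on the generating objects $i_m(X)\circ Com$ (where the filtration is trivial) and then asserts the conclusion because $F_n$ commutes with pushouts and filtered colimits and every cofibrant object is built from such generators by these operations; the step showing that the cofibration condition survives pushout along a generating cofibration is left implicit. Your (c)$\Rightarrow$(a) argument supplies exactly that missing detail. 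The key maneuver --- the $\Sigma_n$-equivariant splitting $(A\circ Com)(n)\cong \bigl(\bigvee_{r<n} A(r)\sm_{\Sigma_r} P_r(n)\bigr) \vee A(n)$, obtained by peeling off the top wedge summand $A(n)\sm_{\Sigma_n}(\Sigma_n)_+\sm\Ss \cong A(n)$ --- lets you factor $(F_{n-1}N)(n)\to N(n)$ as a cobase change of $(F_{n-1}M)(n)\to M(n)$ followed by a cobase change of $A(n)\to B(n)$, making the preservation of (a) under cell attachment transparent. This is the computational heart that the paper glosses over, and it also does the work needed for the transfinite-composition step (the resulting ladder of cofibrations has cofibrant latching maps thanks to the same two-stage factorization). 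Targeting condition (a) rather than (b) is a good choice here: (a) is visibly closed under retracts and levelwise colimits, whereas the closure of (b) under pushout requires exactly the computation you do. The only cosmetic omission is a remark that $M_\infty(1)$ and $N(1)$ stay cofibrant along the way, which follows immediately since $(A\circ Com)(1)=A(1)$; this is easily added.
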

\begin{proof}  Assume that (a) holds. It then follows that $F_1M$ is cofibrant in $\MCom$, and that, for each $n>1$, the left vertical map in the pushout square in (\ref{filtration diagram}) is a cofibration in $\MCom$.  Thus, so is the right vertical map, i.e. $f_{n-1}:F_{n-1}M \ra F_nM$ is a cofibration in $\MCom$, and (b) is true.  

The statement that (b) implies (a) follows from the fact that if $M \ra N$ is a cofibration in $\MCom$, then it is a cofibration when viewed in $\Sym(Sp)$.

The statement that (b) $\Rightarrow$ (c) follows from the previous proposition.

Finally, we show that (b) holds if $M$ is cofibrant in $\MCom$.  First consider the special case when $M = i_m(X) \circ Com = X \sm_{\Sigma_m} P_m$ with $X$ a cofibrant object in $\Sigma_m\text{-}Sp$.  By inspection, one sees that the filtration on $X \sm_{\Sigma_m} P_m$ is very simple: 
\begin{equation*}
F_n(X \sm_{\Sigma_m} P_m) = 
\begin{cases}
* & \text{if } n<m \\ X \sm_{\Sigma_m} P_m & \text{if } n \geq m,
\end{cases}
\end{equation*}
and one sees that (b) is true.  

By construction, $F_n$ commutes with pushouts and filtered colimits.  Since an any cofibrant right $\Com$-module $M$ is built by these operations from modules as in the special case, we see that (b) will hold for any cofibrant right $\Com$--module.
\end{proof}

\begin{thm} \label{fitration equiv thm} If $f: M \ra N$ is a weak equivalence between cofibrant right $Com$-modules, then, for all $n$,  $F_nf: F_nM \ra F_nN$ will also be a weak equivalence between cofibrant right $\Com$-modules.
\end{thm}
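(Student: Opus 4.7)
The plan is to prove both claims by induction on $n$, with the cofibrancy portion being essentially immediate from the preceding Proposition \ref{cofib filt prop}. Since $M$ and $N$ are cofibrant in $\MCom$, that proposition already gives that $F_n M$ and $F_n N$ are cofibrant in $\MCom$, so only the weak-equivalence claim requires genuine work. The strategy is to show that the defining pushout square for $F_n M$ (diagram (\ref{filtration diagram})) is a homotopy pushout and that the corresponding map of pushout squares $F_{n-1}M \to F_{n-1}N$, $i_n(M(n))\circ Com \to i_n(N(n))\circ Com$, $i_n((F_{n-1}M)(n))\circ Com \to i_n((F_{n-1}N)(n))\circ Com$ is a pointwise weak equivalence, then invoke the gluing lemma for pushouts along cofibrations.

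The base case $n = 1$ is immediate: $(F_1 M)(r) = M(1)$ for $r \geq 1$ and $*$ for $r = 0$, and the level-$r$ component of $F_1 f$ is just $f(1)\colon M(1) \to N(1)$, a weak equivalence in $Sp$ by hypothesis on $f$. For the inductive step, assume $F_{n-1} f$ is a weak equivalence between cofibrant right $Com$-modules. Proposition \ref{cofib filt prop} (applied to both $M$ and $N$) tells us that $M(n), N(n), (F_{n-1}M)(n), (F_{n-1}N)(n)$ are all cofibrant in $\Sigma_n\text{-}Sp$, and the left vertical arrows in the defining pushouts for $F_n M$ and $F_n N$ are cofibrations in $\MCom$ between cofibrant objects, so these pushouts are homotopy pushouts. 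The top-right map $F_{n-1}f$ is a weak equivalence by the inductive hypothesis; the bottom-left map is $i_n(f(n))\circ Com$, and the top-left map is $i_n((F_{n-1}f)(n))\circ Com$.

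Hence it remains only to verify that the functor $i_n(-)\circ Com\colon \Sigma_n\text{-}Sp \to \MCom$ carries weak equivalences between cofibrant objects to weak equivalences. At level $r$ this functor sends $X$ to $X \sm_{\Sigma_n} P_n(r) = X \sm_{\Sigma_n}(\Epi(\rr,\n)_+ \sm \Ss)$, and the key observation is that $\Sigma_n$ acts freely on $\Epi(\rr,\n)$ (any permutation of $\n$ fixing every element in the image of a surjection must be the identity). Thus $- \sm_{\Sigma_n} \Epi(\rr,\n)_+ \sm \Ss$ is a homotopy orbit construction of a free action, which preserves weak equivalences between cofibrant $\Sigma_n$-spectra. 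Applying this with $X = f(n)$ and $X = (F_{n-1}f)(n)$ (both of which are weak equivalences between cofibrant objects by Proposition \ref{cofib filt prop} and the inductive hypothesis) gives the required weak equivalences on the other two vertices, and the gluing lemma completes the induction. The main technical obstacle is justifying in the positive model structure of \cite{hss} that the induction functor $i_n(-)\circ Com$ has the requisite homotopical properties on cofibrant $\Sigma_n$-spectra, which relies on the freeness of the $\Sigma_n$-action on $\Epi(\rr,\n)$ noted above.
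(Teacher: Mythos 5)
Your proof is correct and follows essentially the same strategy as the paper's: induction on $n$, with the inductive step carried out by viewing the defining pushout squares for $F_nM$ and $F_nN$ as a commutative cube of homotopy pushouts (the gluing lemma) whose vertical cofibrancy is supplied by Proposition~\ref{cofib filt prop}. The one place you go beyond the paper's write-up is in explicitly justifying why $i_n(-)\circ Com = (-)\sm_{\Sigma_n}P_n$ preserves weak equivalences between cofibrant $\Sigma_n$-spectra via the freeness of the $\Sigma_n$-action on $\Epi(\rr,\n)$; the paper passes from ``$f(n)$ is a weak equivalence'' to ``$f(n)\sm_{\Sigma_n}P_n$ is a weak equivalence'' without comment, so your added remark is a genuine (and welcome) filling-in of an implicit step rather than a different argument.
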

\begin{proof} We prove this by induction on $n$.  Since $f$ is a weak equivalence, so is $f(1): M(1) \ra N(1)$, and thus so is $F_1f: M(1) \sm P_1 \ra N(1) \sm P_1$.

Assume that $F_{n-1}f$ is a weak equivalence.  One has a commutative cube in $\MCom$:
\begin{equation*}
\xymatrix {
& F_{n-1}M(n)\sm_{\Sigma_n}P_n \ar@{->}[dl]^-{F_{n-1}f(n)} \ar@{->}'[d][dd] \ar[rr] &&  F_{n-1}M \ar@{->}[dl]^-{F_{n-1}f} \ar@{->}[dd]  \\
F_{n-1}N(n) \sm_{\Sigma_n} P_n \ar@{->}[dd] \ar[rr] & & F_{n-1}N \ar@{->}[dd]&  \\
& M(n)\sm_{\Sigma_n}P_n \ar@{->}[dl]^-{f(n)}\ar@{->}'[r][rr]  &&  F_nM \ar@{->}[dl]^-{F_nf}  \\
N(n) \sm_{\Sigma_n}P_n \ar[rr] & & F_nN,&
}
\end{equation*}
with front and back faces pushouts. By the last proposition, the vertical maps are cofibrations, so the front and back faces are homotopy pushouts.   By assumption, the map $F_{n-1}f$ is a weak equivalence, and thus so is the map labelled $F_{n-1}f(n)$. Since $f$ is a weak equivalence, so is the map labelled $f(n)$.  We conclude that $F_nf$ is also an equivalence.
\end{proof}

We end this section with one more observation. Let  $\bar F_nM$ denote the cofiber of $F_{n-1}M \ra F_nM$, and let $\bar M(n)$ denote the cofiber of $(F_{n-1}M)(n) \ra M(n)$. Note that $\bar M(n) = (\bar F_nM)(n)$.

\begin{cor} \label{cofiber cor}  If $M$ is cofibrant in $\MCom$, then $\bar F_nM = i_n(\bar M(n)) \circ Com$.  Thus, if $I$ is cofibrant in $\ComAlg$, $B(M,\Com, I)$ is filtered with cofiber quotients
$$ \bar F_nB(M,Com,I) \simeq \bar M(n) \sm_{\Sigma_n} I^{\sm n}.$$ 
\end{cor}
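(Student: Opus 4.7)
The claim splits into two independent assertions: first, that $\bar F_nM$, as a right $Com$-module, equals $i_n(\bar M(n))\circ Com$; and second, the resulting identification of the cofiber quotients after applying $B(-,Com,I)$. The first is purely formal, and the second requires invoking the homotopical properties of the bar construction already collected in Section \ref{bar construction and TQ section}.

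For the first assertion, the defining pushout square in diagram (\ref{filtration diagram}) identifies the cofiber of $f_{n-1}\colon F_{n-1}M\to F_nM$ with the cofiber of the left-hand vertical map $g_{n-1}^n\colon i_n((F_{n-1}M)(n))\circ Com \to i_n(M(n))\circ Com$. Both the functor $i_n\colon \SnSp \to \Sym(Sp)$ and the functor $(-)\circ Com\colon \Sym(Sp)\to \MCom$ are left adjoints (the latter by the fact, recorded in Lemma \ref{circ properties lem}(d), that $\circ$ commutes with colimits in the first variable), so each preserves cofibers. Thus $\bar F_nM = i_n(\bar M(n))\circ Com$.

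For the second assertion, Proposition \ref{cofib filt prop} shows that $f_{n-1}$ is a cofibration between cofibrant objects in $\MCom$, hence also a cofibration between objects cofibrant in $\Sym(Sp)$. Applying Proposition \ref{kp prop}(b) to the cofiber sequence $F_{n-1}M \to F_nM \to \bar F_nM$ identifies $\bar F_n B(M,Com,I)$ with $B(i_n(\bar M(n))\circ Com, Com, I)$. Since $\bar M(n)$ is itself cofibrant in $\SnSp$ (once more by Proposition \ref{cofib filt prop}, as a cofiber of a cofibration between cofibrants), the right $Com$-module $i_n(\bar M(n))\circ Com$ is cofibrant in $\Sym(Sp)$. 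The Proposition of Section \ref{bar construction and TQ section} asserting $B(-,Com,I)\simeq (-)\circ_{Com} I$ for such inputs then yields
$$B(i_n(\bar M(n))\circ Com, Com, I) \simeq (i_n(\bar M(n))\circ Com)\co I = i_n(\bar M(n))\circ I,$$
and a direct computation of this last circle product at level $0$ gives $\bar M(n)\sm_{\Sigma_n} I^{\sm n}$.

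The only real work is cofibrancy bookkeeping: one has to verify at each stage that the objects involved are cofibrant enough for the weak equivalences above to compute the correct homotopy cofibers. No new input is needed, however, since Proposition \ref{cofib filt prop} does the heavy lifting, and cofibrancy propagates cleanly through $i_n$, $-\circ Com$, and the recursive pushouts defining $F_nM$.
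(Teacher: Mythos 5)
Your proof is correct and follows essentially the same route as the paper's. The only small differences are cosmetic: you make explicit the formal pushout argument for $\bar F_nM = i_n(\bar M(n))\circ Com$ where the paper tersely cites Proposition~\ref{cofib filt prop}, and you deduce the final equivalence by invoking $B(-,Com,I)\simeq(-)\circ_{Com}I$ directly on the cofibrant module $i_n(\bar M(n))\circ Com$, whereas the paper instead pulls $i_n(\bar M(n))\circ -$ out of the bar construction and uses $B(Com,Com,I)\simeq I$ — the two computations are the same in substance.
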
 
\begin{proof}  The first statement follows from \propref{cofib filt prop}.   We check the last statement: 
\begin{equation*}
\begin{split}
\bar F_nB(M,Com,I) & = B(i_n(\bar M(n)) \circ Com, Com, I) \\
 & = i_n(\bar M(n))\circ B(Com,Com,I) \\ & \simeq i_n(\bar M(n)) \circ I = \bar M(n) \sm_{\Sigma_n}I^{\sm n}.
\end{split}
\end{equation*}
\end{proof}

\section{Filtrations on $TQ(I)$ and $K \otimes I$} \label{Kuhn filt sec}

Lets see what our increasing filtration looks like for our featured right $Com$-modules.

\begin{lem} For any $M \in \Sym(Sp)$, $\displaystyle F_n(M \circ Com) = \bigvee_{r=1}^n i_r(M(r)) \circ Com$.  Thus 
$\bar F_n(M \circ Com)=i_n(M(n)) \circ Com = M(n) \sm_{\Sigma_n}P_n$, 
and so 
$\bar F_n(M \circ Com)(n) = M(n)$.
\end{lem}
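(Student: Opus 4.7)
The plan is to reduce to the elementary special case already handled inside the proof of \propref{cofib filt prop} and then to propagate it across a wedge decomposition.

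First I would record the tautological decomposition $M = \bigvee_{r=1}^{\infty} i_r(M(r))$ in $\Sym(Sp)$. Since $- \circ Com$ commutes with colimits in its first variable by \lemref{circ properties lem}(d), this yields
$$M \circ Com = \bigvee_{r=1}^{\infty} i_r(M(r)) \circ Com.$$

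Next I would observe that the functor $F_n\colon \MCom \ra \MCom$ preserves wedges. This is immediate from the recursive definition: $F_1(N) = i_1(N(1)) \circ Com$ is wedge--preserving because each of $N \mapsto N(1)$, $i_1$, and $- \circ Com$ is; the inductive step constructs $F_n$ from a pushout over $F_{n-1}$, and pushouts commute with coproducts. (This is the same observation invoked at the end of the proof of \propref{cofib filt prop}.)

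Then I would invoke from the special-case computation already carried out in the proof of \propref{cofib filt prop} that
\begin{equation*}
F_n(i_r(X) \circ Com) =
\begin{cases}
* & \text{if } n < r, \\
i_r(X) \circ Com & \text{if } n \geq r.
\end{cases}
\end{equation*}
Combining these three inputs summand by summand gives the first claim: $F_n(M \circ Com) = \bigvee_{r=1}^n i_r(M(r)) \circ Com$. The cofiber $\bar F_n(M \circ Com)$ is visibly the top summand $i_n(M(n)) \circ Com$, which by \exref{M circ Com ex} is $M(n) \sm_{\Sigma_n} P_n$. Evaluating at level $n$ uses $P_n(n) = \Epi(\n,\n)_+ \sm \Ss = (\Sigma_n)_+ \sm \Ss$, so that $M(n) \sm_{\Sigma_n} P_n(n) = M(n)$, finishing the proof.

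I do not foresee any real obstacle; the argument is essentially bookkeeping, and the only nontrivial input is the special case already present in \propref{cofib filt prop}. The one place where one must be careful is checking that $F_n$ preserves arbitrary wedges (not just wedges of cofibrant summands), but this is built into its recursive pushout definition and the colimit properties of $\circ$ from \lemref{circ properties lem}.
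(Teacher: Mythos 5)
Your proposal is correct and matches the paper's argument, which is a one-line proof noting exactly the decomposition $M \circ Com = \bigvee_{r=1}^{\infty} i_r(M(r)) \circ Com$ and leaving the rest as ``easy to see.'' Your write-up simply supplies the details the paper omits (that $F_n$ preserves wedges and the evaluation of each summand via the special case from \propref{cofib filt prop}), all of which check out.
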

\begin{proof} This is easy to see, noting that $\displaystyle M \circ Com = \bigvee_{r+1}^{\infty} i_r(M(r) \circ Com$.
\end{proof}

\begin{prop} \label{filt of B(M,C,C) prop}  For any $M \in \MCom$, 
$ \bar F_nB(M,Com,Com)(n) = B(M,Com,S(1))(n)$.
\end{prop}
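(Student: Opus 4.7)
The plan is to reduce the statement to the simplicial level, apply the previous lemma degreewise, and then appeal to the fact that both $\bar F_n$ and evaluation at arity $n$ commute with geometric realization.

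First, recall that $B(M,Com,Com)$ is defined as the geometric realization of the simplicial object in $\MCom$ with
$$B_s(M,Com,Com) = M \circ Com^{\circ s} \circ Com = (M \circ Com^{\circ s}) \circ Com,$$
i.e.\ at each simplicial level it is the free right $Com$--module generated by the symmetric sequence $M \circ Com^{\circ s}$. Setting $N = M \circ Com^{\circ s}$ in the previous lemma gives
$$\bar F_n B_s(M,Com,Com) = i_n\big((M \circ Com^{\circ s})(n)\big)\circ Com, \qquad \bar F_n B_s(M,Com,Com)(n) = (M \circ Com^{\circ s})(n).$$
On the other side, because $\Ss(1)$ is the unit for $\circ$,
$$B_s(M,Com,\Ss(1)) = M \circ Com^{\circ s} \circ \Ss(1) = M \circ Com^{\circ s},$$
and evaluating at arity $n$ also yields $(M \circ Com^{\circ s})(n)$. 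Thus at each simplicial degree $s$ we have a natural identification
$$\bar F_n B_s(M,Com,Com)(n) \;=\; B_s(M,Com,\Ss(1))(n).$$

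Next, I would check that these identifications are compatible with the simplicial face and degeneracy maps. The face maps in $B_*(M,Com,Com)$ that use the right $Com$--action on the rightmost $Com$ factor correspond, on the left-hand side, to projecting away the $Com$ we added freely, and on the right-hand side, to the action $Com \circ \Ss(1) \to \Ss(1)$ coming from $Com \to \Ss(1)$; the other face and degeneracy maps are induced directly from $M \circ Com^{\circ s}$ in both cases. So we obtain an isomorphism of simplicial spectra.

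Finally, I would pass to geometric realization. Since $F_n$ is built from pushouts and is natural, it preserves colimits (this was already invoked in the proof of \propref{cofib filt prop}), so in particular $\bar F_n$ commutes with geometric realization; evaluation at arity $n$ also commutes with all colimits in $\Sym(Sp)$. Therefore the degreewise equality above realizes to
$$\bar F_n B(M,Com,Com)(n) \;=\; B(M,Com,\Ss(1))(n),$$
as required. The main point to take care of is the compatibility of the simplicial structure maps under the two identifications; once that is in place, the rest is a formal manipulation using freeness of $B_s(M,Com,Com)$ as a right $Com$--module and unitality of $\Ss(1)$.
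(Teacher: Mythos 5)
Your proposal is correct and follows essentially the same route as the paper: apply the preceding lemma degreewise with $M \circ Com^{\circ s}$ in place of $M$, identify the result with $B_s(M,Com,\Ss(1))(n)$, and commute $\bar F_n$ and evaluation at arity $n$ past geometric realization. You are slightly more explicit than the paper in flagging the need to check compatibility with the simplicial structure maps, which is a reasonable point to note, but it is the same argument.
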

\begin{proof} Since $B(M,Com,Com) = |B_*(M,Com,Com)|$, it follows that 
$$\bar F_nB(M,Com,Com)(n) = |\bar F_nB_*(M,Com,Com)(n)|.$$ Now we compute:
\begin{equation*}
\begin{split}
\bar F_nB_s(M,Com,Com)(n) &
= \bar F_n(M \circ Com^{\circ s} \circ Com)(n)  \\
  & = (M \circ Com^{\circ s})(n) \text{ \ (by the lemma)} \\
  &= B_s(M,Com,\Ss(1))(n).
\end{split}
\end{equation*}

\end{proof}

\begin{cor} If $I$ is cofibrant in $\ComAlg$ and $M \in \MCom$ is cofibrant in $\Sym(Sp)$, then $B(M,Com,I)$ is filtered with cofiber quotients
$$ \bar F_nB(M,Com,I) \simeq B(M,Com,\Ss(1))(n) \sm_{\Sigma_n} I^{\sm n}.$$
\end{cor}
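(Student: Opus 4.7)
The plan is to bootstrap from \corref{cofiber cor} by passing to a cofibrant replacement of $M$ in $\MCom$. The hypothesis gives $M$ cofibrant only in $\Sym(Sp)$, so \corref{cofiber cor} does not apply directly. However, the bar construction itself supplies the needed replacement: set $N = B(M, Com, Com)$. By the proposition in \secref{bar construction and TQ section}, $N$ is cofibrant in $\MCom$ (using that $M$ is cofibrant in $\Sym(Sp)$), and the augmentation $\epsilon: N \to M$ is a weak equivalence in $\MCom$.

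First I would use \propref{kp prop}(a) applied to $\epsilon$ to obtain a weak equivalence of spectra $B(N, Com, I) \xrightarrow{\sim} B(M, Com, I)$, so that transporting the filtration $F_n B(N, Com, I)$ across this equivalence gives a meaningful filtration on (a model of) $B(M, Com, I)$. Next, because $N$ is cofibrant in $\MCom$ and $I$ is cofibrant in $\ComAlg$, \corref{cofiber cor} applies directly and yields
$$ \bar F_n B(N, Com, I) \simeq \bar N(n) \sm_{\Sigma_n} I^{\sm n}.$$

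To finish, I would identify $\bar N(n)$ with the right-hand side of the target formula. This is exactly what \propref{filt of B(M,C,C) prop} computes: for any $M \in \MCom$,
$$ \bar N(n) = \bar F_n B(M, Com, Com)(n) = B(M, Com, \Ss(1))(n).$$
Substituting this into the previous display yields $\bar F_n B(M, Com, I) \simeq B(M, Com, \Ss(1))(n) \sm_{\Sigma_n} I^{\sm n}$, as required.

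I do not anticipate a real obstacle: every ingredient is already in place, and the only mild subtlety is bookkeeping about where the filtration lives. Specifically, one should be precise that \emph{the} filtration on $B(M, Com, I)$ is defined via the cofibrant replacement $N$, so that one is really stringing together the natural filtration on $N$ (which is well-behaved by \propref{cofib filt prop}), the quotient computation of \corref{cofiber cor}, and the simplicial identification in \propref{filt of B(M,C,C) prop}. No new homotopical input is needed beyond the weak equivalence invariance of $B(-, Com, I)$ in the first variable among objects cofibrant in $\Sym(Sp)$, which is already granted by \propref{kp prop}(a).
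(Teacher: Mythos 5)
Your proof is correct and follows essentially the route the paper intends: replace $M$ by the cofibrant right module $B(M,Com,Com)$, apply \corref{cofiber cor}, and identify the cofiber $\bar N(n)$ via \propref{filt of B(M,C,C) prop}. The only simplification available is that the identification $B(M,Com,I) = B(M,Com,Com)\circ_{Com}I$ from \secref{bar construction and TQ section} lets the canonical filtration of $N = B(M,Com,Com)$ induce a filtration on $B(M,Com,I)$ on the nose, so the extra bar construction $B(N,Com,I)$ and the appeal to \propref{kp prop}(a) are not needed.
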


We specialize this to the case when $M=\Ss(1)$.  Michael Ching \cite{ching} showed that the spectra $B(S(1),Com,S(1))(n)$ is the $n$th spectrum in a cooperad that should be called $Lie$. 

Collecting our various results, we have recovered the filtration of $TQ(I)$ as developed in \cite{behrens rezk}.

\begin{thm} \label{TQ filt thm} The right $Com$-module $B(S(1),Com,Com)$ is filtered with cofiber quotients
$$\bar F_nB(S(1),Com,Com)(n) = Lie(n).$$  Thus if $I$ is cofibrant in $\ComAlg$, $TQ(I)$ is filtered with cofiber quotients
$$ \bar F_nTQ(I) \simeq Lie(n)\sm_{\Sigma_n}I^{\sm n}.$$
\end{thm}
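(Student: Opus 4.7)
The plan is to deduce both assertions of \thmref{TQ filt thm} by combining \propref{filt of B(M,C,C) prop} with \corref{cofiber cor}, applied to the right $Com$-module $B(\Ss(1), Com, Com)$.

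For the first assertion, the identification of the cofiber quotients of the filtration on $B(\Ss(1), Com, Com)$, I would apply \propref{filt of B(M,C,C) prop} with $M$ taken to be $\Ss(1)$ itself. This gives immediately
$$ \bar F_n B(\Ss(1), Com, Com)(n) = B(\Ss(1), Com, \Ss(1))(n),$$
and by Ching's identification \cite{ching} the right-hand side is precisely the $n$-th piece $Lie(n)$ of the spectral cooperad $BCom$.

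For the second assertion, I would use $B(\Ss(1), Com, Com)$ as a cofibrant replacement of $\Ss(1)$ in $\MCom$; its cofibrancy is supplied by the proposition in \secref{bar construction and TQ section}, since $\Ss(1)$ is cofibrant in $\Sym(Sp)$. By \propref{kp prop}(a), the natural weak equivalence $\epsilon : B(\Ss(1), Com, Com) \xra{\sim} \Ss(1)$ of cofibrant right $Com$-modules induces a weak equivalence
$$ B(B(\Ss(1), Com, Com), Com, I) \xra{\sim} B(\Ss(1), Com, I) = TQ(I).$$
Now apply \corref{cofiber cor} to the left-hand side with $M = B(\Ss(1), Com, Com)$: the object $B(M, Com, I)$ acquires a filtration whose $n$-th cofiber quotient is $\bar M(n) \sm_{\Sigma_n} I^{\sm n}$. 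By the first assertion, $\bar M(n) = Lie(n)$, so transferring under the displayed equivalence yields the desired filtration of $TQ(I)$ with cofibers $Lie(n) \sm_{\Sigma_n} I^{\sm n}$.

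The substance is entirely contained in the earlier results of the section, so there is no real obstacle. The one point requiring attention is the cofibrancy hypothesis in \corref{cofiber cor}, namely that $B(\Ss(1), Com, Com)$ is cofibrant in $\MCom$ (not merely in $\Sym(Sp)$); this is the content of the cofibrancy claim of \secref{bar construction and TQ section}. Once that is noted, the remainder is a purely formal assembly of prior results.
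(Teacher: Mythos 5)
Your proof is correct and follows essentially the same route as the paper: \propref{filt of B(M,C,C) prop} with $M=\Ss(1)$ combined with Ching's identification gives the first assertion, and the second follows from \corref{cofiber cor} together with the cofibrancy of $B(\Ss(1),Com,Com)$ in $\MCom$. The only cosmetic difference is that you pass through the double bar construction $B(B(\Ss(1),Com,Com),Com,I)$ and transfer the filtration along $\epsilon$, whereas the paper's unnumbered corollary after \propref{filt of B(M,C,C) prop} packages this step directly; the substance is identical.
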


We turn to our analysis of $K \otimes I = (K \otimes Com) \circ_{Com} I$.  

The following is easy to see by inspection.

\begin{lem} \label{K filtration lem} $F_r(K\otimes Com)(n) = F_rK^{\sm n}\sm \Ss$, where 
$$F_rK^{\sm n} = \{(x_1 \sm \dots \sm x_n) \in K^{\sm n} \ | \ \text{at most $r$ of the $x_i$ are distinct} \}.$$ 
\end{lem}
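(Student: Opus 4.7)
The plan is induction on $r$, directly unwinding the recursive pushout definition of $F_r$ applied to $M = K \otimes Com$. For the base case $r=1$, $F_1(K \otimes Com) = i_1(K) \circ Com$, which at level $n$ equals $K \sm_{\Sigma_1} \Epi(\n,\mathbf{1})_+ \sm \Ss = K \sm \Ss$ because there is a unique surjection $\n \ra \mathbf{1}$. The canonical map into $(K \otimes Com)(n) = K^{\sm n} \sm \Ss$ is then the total diagonal $y \mapsto (y,\dots,y)$, whose image is exactly $F_1 K^{\sm n} \sm \Ss$.

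For the inductive step, assume $F_{r-1}(K \otimes Com)(m) = F_{r-1}K^{\sm m} \sm \Ss$ for all $m \geq 1$, and evaluate the defining pushout (\ref{filtration diagram}) at level $n$. Using the formula $(i_r(X) \circ Com)(n) = X \sm_{\Sigma_r} \Epi(\n, \rr)_+ \sm \Ss$, the square becomes
\[
\xymatrix{
F_{r-1}K^{\sm r} \sm_{\Sigma_r} \Epi(\n, \rr)_+ \sm \Ss \ar[d] \ar[r] & F_{r-1}K^{\sm n} \sm \Ss \ar[d] \\
K^{\sm r} \sm_{\Sigma_r} \Epi(\n, \rr)_+ \sm \Ss \ar[r] & F_r(K \otimes Com)(n),
}
\]
with all four maps induced by the right $Com$-action, which sends the class of $(x,\phi)$ with $\phi \in \Epi(\n,\rr)$ to $\Delta_\phi(x) = (x_{\phi(1)},\dots,x_{\phi(n)}) \in K^{\sm n}$.

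The combinatorial heart is that, because $\phi$ is surjective, $\Delta_\phi(x)$ has the same number of distinct coordinates as $x$, hence at most $r$. So both the top-right and bottom-left corners land in $F_r K^{\sm n} \sm \Ss \subset K^{\sm n} \sm \Ss$, producing a natural map from the pushout $F_r(K \otimes Com)(n)$ to $F_r K^{\sm n} \sm \Ss$, which I then verify is a bijection. Surjectivity is immediate: elements of $F_r K^{\sm n}$ with at most $r-1$ distinct coordinates come from $F_{r-1}K^{\sm n}$ by induction, while elements with exactly $r$ distinct coordinates arise as $\Delta_\phi(x)$ for some $x \in K^{\sm r}$ with all coordinates distinct and some $\phi \in \Epi(\n,\rr)$. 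Injectivity reduces to the observation that the only identifications among pairs $(x,\phi)$ with $x$ having all $r$ coordinates distinct are the $\Sigma_r$-orbit identifications already built into $\sm_{\Sigma_r}$.

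The one point requiring care, and the nearest thing to an obstacle, is the degenerate case $n<r$: then $\Epi(\n,\rr) = \emptyset$, the left column of the square collapses to $*$, and the pushout reduces to $F_{r-1}K^{\sm n} \sm \Ss$. This still matches the claim, since $F_r K^{\sm n} = K^{\sm n} = F_{r-1}K^{\sm n}$ whenever $n \leq r-1$ (an $n$-tuple has at most $n$ distinct entries). Otherwise the verification is transparent bookkeeping, consistent with the author's assessment that the identification is \emph{easy to see by inspection}.
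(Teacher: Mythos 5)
Your proposal is correct, and it is the natural argument the paper is gesturing at when it says the lemma is ``easy to see by inspection''; since the paper supplies no proof, you have essentially filled in the expected details. You unwind the recursive pushout defining $F_r$ at each level $n$, use the inductive hypothesis to rewrite the three known corners, and identify the pushout with $F_r K^{\sm n} \sm \Ss$. The key combinatorial observation that $\Delta_\phi(x)$ has exactly as many distinct coordinates as $x$ whenever $\phi$ is surjective is stated and used correctly, as is the degenerate case $n < r$ where $\Epi(\n,\rr)=\emptyset$. One small point worth making explicit in the injectivity step: the structure map $K^{\sm r}\sm_{\Sigma_r}\Epi(\n,\rr)_+ \ra K^{\sm n}$ is \emph{not} injective when $x$ has repeated coordinates (the same tuple in $K^{\sm n}$ can arise from distinct $\Sigma_r$-orbits $[x,\phi]\neq[x,\psi]$), but this is harmless precisely because such $[x,\phi]$ lie in the image of $F_{r-1}K^{\sm r}\sm_{\Sigma_r}\Epi(\n,\rr)_+$ and are glued to their common image in $F_{r-1}K^{\sm n}$ in the pushout; since the left vertical map is a monomorphism, the pushout decomposes as $(A\setminus i(C))\sqcup B$, on which your case analysis (exactly $r$ distinct coordinates versus at most $r-1$) gives the bijection. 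Your proposal gets this right in spirit but glosses over it slightly, so spelling it out would tighten the argument.
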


We recover the author's filtration of $K \otimes I$, and then $TQ(I)$, as developed in \cite{K tensor R}.

\begin{thm} \label{tensor filt thm} $\bar F_n (K \otimes Com)(n) = (K^{\sm n}/\Delta_n(K)) \sm \Ss$, where $\Delta_n(K)$ is the fat diagonal. Thus if $I$ is cofibrant in $\ComAlg$, $K \otimes I$ is filtered with cofiber quotients
$$ \bar F_n(K \otimes I) \simeq (K^{\sm n}/\Delta_n(K)) \sm_{\Sigma_n}I^{\sm n}.$$
\end{thm}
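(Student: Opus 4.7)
The plan is to directly identify $\bar F_n(K \otimes Com)(n)$ from \lemref{K filtration lem}, and then feed this into \corref{cofiber cor} to obtain the filtration on $K \otimes I$.

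First I would assemble the preparatory facts. By \thmref{cofibancy of tensor thm}, $K \otimes Com$ is cofibrant in $\MCom$, hence cofibrant in $\Sym(Sp)$ (since cofibrations in $\MCom$ remain cofibrations in $\Sym(Sp)$, as recalled in \secref{MODELSTRUCTURE SEC}). Combining \thmref{tensor thm} with the weak equivalence $B(M,Com,I) \xrightarrow{\sim} M \circ_{Com} I$ from \propref{kp prop} gives $K \otimes I \simeq B(K \otimes Com, Com, I)$, so \corref{cofiber cor} will apply to $M = K \otimes Com$.

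Next I would compute the level-$n$ cofiber explicitly. By \lemref{K filtration lem},
$$ \bar F_n(K \otimes Com)(n) = \frac{F_nK^{\sm n}}{F_{n-1}K^{\sm n}} \sm \Ss. $$
Since any smash tuple involves at most $n$ distinct entries, $F_nK^{\sm n} = K^{\sm n}$. A class $(x_1 \sm \cdots \sm x_n) \in K^{\sm n}$ has at most $n-1$ distinct entries precisely when either $x_i = x_j$ for some $i \neq j$, or some $x_i$ equals the basepoint (in which case the whole class collapses to the basepoint of $K^{\sm n}$); this locus is by definition the fat diagonal $\Delta_n(K)$. Hence $\bar F_n(K \otimes Com)(n) = (K^{\sm n}/\Delta_n(K)) \sm \Ss$, proving the first assertion.

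Finally, feeding this into \corref{cofiber cor} gives that $K \otimes I \simeq B(K \otimes Com, Com, I)$ is filtered with cofiber quotients
$$ \bar F_n(K \otimes I) \simeq \bar M(n) \sm_{\Sigma_n} I^{\sm n} \simeq (K^{\sm n}/\Delta_n(K)) \sm_{\Sigma_n} I^{\sm n}, $$
absorbing the extra $\sm \Ss$ factor since $\Ss$ is the unit and carries trivial $\Sigma_n$-action, while $\Sigma_n$ permutes the factors of $K^{\sm n}$ (and $I^{\sm n}$) in the evident way, preserving $\Delta_n(K)$. The only step demanding real care is the identification of $F_{n-1}K^{\sm n}$ with the fat diagonal: one must verify that the ``at most $n-1$ distinct entries'' condition, interpreted in the smash product where any tuple containing the basepoint collapses, coincides with the standard definition of $\Delta_n(K)$. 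Everything else is bookkeeping from the structural results of the previous sections.
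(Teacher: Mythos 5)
Your proposal is correct and follows the route the paper clearly intends (the paper leaves the proof implicit, stating \lemref{K filtration lem} as ``easy to see by inspection'' and then asserting the theorem): identify $F_nK^{\sm n} = K^{\sm n}$ and $F_{n-1}K^{\sm n} = \Delta_n(K)$ via the lemma, verify the cofibrancy hypothesis via \thmref{cofibancy of tensor thm}, identify $K\otimes I$ with $B(K\otimes Com,Com,I)$ via \thmref{tensor thm} and \propref{kp prop}, and then feed into \corref{cofiber cor}. Your cautionary remark about the basepoint is resolved by noting that any smash tuple containing the basepoint is already the basepoint of $K^{\sm n}$, which lies in every based subspace, so the ``at most $n-1$ distinct entries'' condition coincides with the usual fat diagonal.
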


\begin{cor} \label{old filt cor} If $I$ is cofibrant in $\ComAlg$, $\displaystyle (\colim_k \Sigma^{-k}(S^k \otimes I))$ is filtered with cofiber quotients
$$ \bar F_n((\colim_k \Sigma^{-k}(S^k \otimes I)) \simeq (\colim_k \Sigma^{-k}(S^{kn}/\Delta_n(S^k))) \sm_{\Sigma_n} I^{\sm n}.$$
\end{cor}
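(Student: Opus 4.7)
The plan is to reduce to \thmref{tensor filt thm} by commuting the increasing filtration past the colimit in $k$. First I would use \thmref{tensor thm} together with the fact that $\circ_{Com}$ commutes with colimits in its first variable (\lemref{circ properties lem}(d)) to rewrite
$$\colim_k \Sigma^{-k}(S^k \otimes I) \;=\; \Bigl(\colim_k \Sigma^{-k}(S^k \otimes Com)\Bigr) \circ_{Com} I.$$
Setting $M := \colim_k \Sigma^{-k}(S^k \otimes Com)$, my next step would be to verify that $M$ is cofibrant as a right $Com$-module, using \thmref{cofibancy of tensor thm} applied to each $S^k \otimes Com$ together with the fact that the transition maps in the colimit system can be arranged to be cofibrations in $\MCom$.

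With $M$ cofibrant, I would then apply \corref{cofiber cor} to conclude
$$ \bar F_nB(M,Com,I) \;\simeq\; \bar M(n)\sm_{\Sigma_n}I^{\sm n},$$
reducing the problem to identifying $\bar M(n)$. The key observation here is that the increasing filtration $F_\bullet$ is defined by iterated pushouts in Diagram~(\ref{filtration diagram}) and therefore commutes with arbitrary colimits; desuspension commutes with it as well, level-by-level on symmetric sequences. The same then holds for the cofiber $\bar F_n$. Combining these commutations with \thmref{tensor filt thm} applied to $K = S^k$, I would obtain
$$ \bar M(n) \;=\; \colim_k \Sigma^{-k}\bigl(\bar F_n(S^k \otimes Com)(n)\bigr) \;\simeq\; \colim_k \Sigma^{-k}\bigl((S^{kn}/\Delta_n(S^k))\sm \Ss\bigr),$$
and substituting this back into the previous display yields the stated formula.

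The main obstacle I would anticipate is ensuring that the cofibrancy hypothesis of \corref{cofiber cor} genuinely holds for the colimit $M$, rather than just for each $S^k \otimes Com$ individually. \propref{cofib filt prop} is the natural tool for this, since it characterizes cofibrancy in $\MCom$ in terms of the increasing filtration itself, which is well behaved under sequential colimits of cofibrations. A smaller point is choosing a model in which $\Sigma^{-k}$ acts cleanly on symmetric sequences so that the commutation claims above are literal equalities, but this is routine in the positive model structure used here, and in any case only affects the representatives, not the weak equivalence class of the cofiber quotient.
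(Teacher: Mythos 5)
Your proposal is correct and is essentially the argument the paper leaves implicit: apply \thmref{tensor filt thm} with $K = S^k$, desuspend, and pass to the colimit over $k$, using the fact that $F_n$ and $\bar F_n$ commute with filtered colimits and levelwise (de)suspension to identify $\bar M(n)$ for $M = \colim_k \Sigma^{-k}(S^k \otimes Com)$. Your attention to the cofibrancy of the colimit module (via \thmref{cofibancy of tensor thm} and cofibration transition maps, or alternatively via \propref{cofib filt prop}) is exactly the point the paper relies on tacitly both here and in \secref{taq is stablization sec}, so there is no gap.
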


Recall that we have shown that if $I$ is a cofibrant algebra, then  
\begin{equation}  TQ(I) \simeq \colim_k \Sigma^{-k}(S^k \otimes I).
\end{equation}

\thmref{TQ filt thm} recovers the filtration of $TQ(I)$ from \cite{behrens rezk}, while \corref{old filt cor} recovers the filtration on $TQ(I)$ from \cite{K tensor R}.  
In \cite{K tensor R}, it was noted that the $\Sigma_n$--spectrum $\displaystyle \colim_k \Sigma^{-k} S^{kn}$  was equivalent to a $\Sigma_n$-spectrum arising in Goodwillie calculus and soon to be called $Lie(n)$. (This observation is due to Arone, see \cite[\S 7]{arone dwyer}.) The authors of \cite{behrens rezk} rather sensibly ask if the filtration on $TQ(I)$ given in \cite{K tensor R} agrees with that in \cite{behrens rezk}. 

Our study of the category of right $Com$-modules makes it easy answer this question.

\begin{thm}  The filtrations of $TQ(I)$ defined in \cite{K tensor R} and \cite{behrens rezk} agree.
\end{thm}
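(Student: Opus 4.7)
The plan is to deduce the theorem formally from the increasing filtration machinery developed in \secref{com module filt sec}. Both filtrations of $TQ(I)$ arise from applying the canonical increasing filtration $F_n$ to a cofibrant right $Com$-module model of $\Ss(1)$ and then circle-producting with $I^c$. Concretely, \thmref{TQ filt thm} realizes the Behrens-Rezk filtration as the image of the filtration of $B(\Ss(1),Com,Com)$ under $\text{-}\circ_{Com} I^c$, while \corref{old filt cor} realizes the Kuhn filtration as the image of the filtration of $\colim_k \Sigma^{-k}(S^k \otimes Com)$. Both of these right $Com$-modules are cofibrant in $\MCom$: the former by the proposition at the start of \secref{bar construction and TQ section}, the latter by \thmref{cofibancy of tensor thm} together with the fact that a sequential colimit along cofibrations preserves cofibrancy. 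Each also carries a natural weak equivalence to $\Ss(1)$, namely the bar augmentation and the equivalence of \propref{suspension model prop}.

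First I would produce a weak equivalence between the two cofibrant models in $\MCom$. By a standard lifting argument, factor the augmentation $\colim_k \Sigma^{-k}(S^k \otimes Com) \ra \Ss(1)$ as a trivial cofibration $\colim_k \Sigma^{-k}(S^k \otimes Com) \xra{\sim} Z$ followed by a fibration $Z \ra \Ss(1)$; by two-out-of-three, this fibration is acyclic. Since $B(\Ss(1),Com,Com)$ is cofibrant, the bar augmentation lifts across $Z \ra \Ss(1)$ to a map $\alpha: B(\Ss(1),Com,Com) \ra Z$, which is a weak equivalence by two-out-of-three. The object $Z$ is itself cofibrant (as the target of a cofibration from a cofibrant object), so we obtain a zigzag
$$B(\Ss(1),Com,Com) \xra{\sim} Z \xla{\sim} \colim_k \Sigma^{-k}(S^k \otimes Com)$$
of weak equivalences between cofibrant right $Com$-modules.

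Next, I would apply the filtration functor $F_n$. By \thmref{fitration equiv thm}, $F_n$ of each arrow in the zigzag is a weak equivalence between cofibrant right $Com$-modules, and by naturality these are compatible with the filtration structure maps $f_{n-1}$. Circle-producting with a cofibrant $I \in \ComAlg$ and invoking \propref{kp prop}(a) yields a zigzag of weak equivalences of filtered spectra whose $n$th terms are $F_nB(\Ss(1),Com,Com) \circ_{Com} I$ and $F_n(\colim_k \Sigma^{-k}(S^k \otimes Com)) \circ_{Com} I$. By \thmref{TQ filt thm} and \corref{old filt cor}, these are precisely the Behrens-Rezk and Kuhn filtrations of $TQ(I)$, so the two filtrations are equivalent.

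The main obstacle is essentially just the construction of the comparison $\alpha$: once it is in hand, every subsequent step is forced by the preservation properties of $F_n$ and of the bar construction established in Sections \ref{bar construction and TQ section} and \ref{com module filt sec}. A small point to verify is that $F_n$ commutes with the sequential colimit defining $\colim_k \Sigma^{-k}(S^k \otimes Com)$, so that our abstract $F_n$ of this object agrees with the explicit filtration underlying \corref{old filt cor}; this follows from the observation in the proof of \propref{cofib filt prop} that $F_n$ is built from pushouts and filtered colimits, hence commutes with filtered colimits.
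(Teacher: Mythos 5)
Your proof is correct and follows essentially the same route as the paper's: both filtrations are realized as the canonical increasing filtration of a cofibrant right $Com$-module, the two models $B(\Ss(1),Com,Com)$ and $\colim_k \Sigma^{-k}(S^k \otimes Com)$ are identified as weakly equivalent, and \thmref{fitration equiv thm} is invoked. Your lifting argument producing the direct comparison map $\alpha$ is a worthwhile extra step the paper glosses over, since the equivalence established in \secref{taq is stablization sec} is a priori only a zigzag through the non-cofibrant $\Ss(1)$, while \thmref{fitration equiv thm} as stated requires an actual map between cofibrant modules.
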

\begin{proof}  As constructed here, both filtrations arise from the canonical filtrations of cofibrant right $Com$-modules.  We have shown that these modules are weakly equivalent:
$$ B(S(1),Com,Com)  \simeq \colim_k \Sigma^{-k}(S^k \otimes Com).$$
Thus \thmref{fitration equiv thm} tells us that these filtrations agree.
\end{proof}
We recover Arone's observation: 
\begin{cor} $\displaystyle B(\Ss(1),Com,\Ss(1))(n) \simeq \colim_k \Sigma^{-k} S^{kn}/\Delta_n(S^k) \sm \Ss$.
\end{cor}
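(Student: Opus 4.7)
The plan is to recognize both sides as the $n$-th cofiber quotient of the canonical increasing filtration of two cofibrant right $Com$-modules that we have already shown to be weakly equivalent. On the bar construction side, applying \propref{filt of B(M,C,C) prop} with $M = \Ss(1)$ identifies the left-hand side directly:
$$ B(\Ss(1),Com,\Ss(1))(n) = \bar F_n B(\Ss(1),Com,Com)(n). $$

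On the tensor side, \thmref{tensor filt thm} gives $\bar F_n(S^k \otimes Com)(n) = (S^{kn}/\Delta_n(S^k)) \sm \Ss$ for each $k$. Since $F_n$ is built inductively from pushouts and from the functor $(-) \sm_{\Sigma_n} P_n$, it commutes with both desuspensions and filtered colimits (as already noted in the proof of \propref{cofib filt prop}), and the same holds for the cofiber $\bar F_n$. Consequently
$$ \bar F_n\!\left(\colim_k \Sigma^{-k}(S^k \otimes Com)\right)\!(n) \;\simeq\; \colim_k \Sigma^{-k}(S^{kn}/\Delta_n(S^k)) \sm \Ss, $$
which is the right-hand side of the claim.

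Finally, I invoke the chain of weak equivalences between cofibrant right $Com$-modules
$$ B(\Ss(1),Com,Com) \xra{\sim} \Ss(1) \xla{\sim} \colim_k \Sigma^{-k}(S^k \otimes Com) $$
coming from \propref{suspension model prop}; the cofibrancy on the right follows from \thmref{cofibancy of tensor thm} together with closure of cofibrations in $\MCom$ under desuspensions and filtered colimits. Then \thmref{fitration equiv thm} tells us that $F_n$ sends this zig-zag to a zig-zag of weak equivalences, and passing to cofibers at level $n$ chains together the two displays above to yield the asserted equivalence. The only delicate point is the compatibility of $F_n$ with desuspensions and filtered colimits invoked in the middle step, which is immediate from the inductive construction since both operations commute with pushouts and with $(-) \sm_{\Sigma_n} P_n$; otherwise the argument is a straightforward assembly of the tools developed earlier in the paper.
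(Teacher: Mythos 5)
Your argument is correct and matches the route the paper intends: the corollary is stated immediately after the theorem that the two filtrations agree, and is meant to follow by equating the $n$-th cofiber quotients of the two weakly equivalent cofibrant right $Com$-modules $B(\Ss(1),Com,Com)$ and $\colim_k \Sigma^{-k}(S^k\otimes Com)$, which were identified as $B(\Ss(1),Com,\Ss(1))(n)$ (via Proposition~\ref{filt of B(M,C,C) prop}) and $\colim_k \Sigma^{-k}(S^{kn}/\Delta_n(S^k))\sm\Ss$ (via Theorem~\ref{tensor filt thm} and Corollary~\ref{old filt cor}), respectively. You supply the details the paper leaves implicit (in particular that $F_n$ and $\bar F_n$ commute with levelwise desuspension and sequential colimits, which justifies Corollary~\ref{old filt cor}); that reasoning is sound since the inductive construction of $F_n$ uses only $i_m(-)\circ Com$ and pushouts, both of which commute with those operations.
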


\section{Replacing $Com$ by a more general operad} \label{remarks sec}

For simplicity and because it is so central, we have been working with the operad $Com$, but much of what we have presented here can be generalized with $Com$ replaced by a more general operad in $Sp$.  As was done in \cite{kuhn pereira}, one should work with reduced operads $\Oo$ such that each $\Oo(n)$ is cofibrant in $Sp$, and $S \ra \Oo(1)$ is a cofibration. It is also simplifying to insist that $S \ra \Oo(1)$ is a weak equivalence.

With these hypotheses, all our results about $B(M,Com,I)$ also hold for $B(M, \Oo,I)$, as do our general results about the decreasing and increasing filtration of right $\Oo$-modules.  (The decreasing filtration was already studied in this generality in \cite{kuhn pereira}.) In particular, one defines $TQ_{\Oo}(I)$ to be $B(\Oo(1),\Oo,I)$.

One needs more care with discussions regarding modelling $K \otimes I$, with $K$ a based space and $I$ an $\Oo$-algebra. 

If one defines $K \otimes \Oo$ by letting $(K \otimes \Oo)(n) = K^{\sm n} \sm O(n)$, it isn't hard to see that this is an $\Oo$-bimodule, and then that there is a natural map $K \otimes I \ra (K \otimes \Oo) \co I$ that can be proved to be an isomorphism using the same method of proof as in the proof of \thmref{tensor thm}. 

When $\Oo$ is not $Com$, it is not clear that $(K \otimes \Oo)$ is necessarily cofibrant in the category of right $\Oo$-modules. In particular, our proof that $\m_+ \otimes Com$ is cofibrant -- part of the proof of \thmref{cofibancy of tensor thm} -- does not work for more general operads.  Nevertheless it is possible the result is still true. 

 A work-around for some purposes is to just replace $(K \otimes \Oo)$ by a cofibrant replacement in $\MCom$. If $I$ is a cofibrant $\Oo$-algebra, since $(K \otimes \Oo)$ will be cofibrant in $\Sym(Sp)$, we will still be able to deduce that $(K \otimes \Oo)^c \co I \simeq K \otimes I$. Since, as before, there will be a weak equivalence of cofibrant right $\Oo$-modules
$$  B(\Oo(1), \Oo, \Oo) \simeq \colim_k \Sigma^{-k} (S^k \otimes \Oo)^c,$$
we can again conclude that the functor $TQ_{\Oo}$ corresponds to stabilization in the category of $\Oo$-algebras.

For a general operad $\Oo$, there does not seem to be a neat description of the increasing filtration on $K \otimes \Oo$, analogous to the description given in \lemref{K filtration lem}.  As illustrated in \cite{K tensor R}, with $\Oo=Com$, the resulting nice description of the cofibers of $K \otimes I$ is key to various applications.

\section{Acknowledgments} This paper is a summary of ideas presented in two talks in spring 2023: {\em A ring and its topological Andr\'e-Quillen homology: there and back again} given in March, 2024, in the MHTRT online seminar, and {\em A canonical filtration of right modules over an operad, topological Andre-Quillen homology, and the Lie operad} presented at the Conference on Generalised Lie algebras in Derived Geometry held in June 2024 outside of Utrecht.  We thank the organizers for the opportunity to present our thoughts. The author's visit to Utrecht was partially supported by a grant from the Simons Foundation.


\end{document}